\theoremstyle{plain}
\newtheorem{algorithm}{Algorithm}[section]
\newtheorem {theorem}{Theorem}[section]
\newtheorem {lemma}{Lemma}[section]
\newtheorem {definition}{Definition}[section]
\newtheorem {remark}{Remark}[section]
\newtheorem {proposition}{Proposition}[section]
\newenvironment{dedication}
\newcommand{\co}{{\rm conv\,}}
\newcommand{\dom}{{\rm dom\,}}
\numberwithin{equation}{section}
\title[Convergence analysis of proximal-type algorithms for DC programs]{Convergence analysis of a proximal-type algorithm for DC programs with applications to variable selection}
\author[Shuang Wu]{Shuang Wu}
\address[Shuang Wu]{Key Laboratory for Applied Statistics of MOE, School of Mathematics and Statistics, Northeast Normal University, Changchun 130024, China}
\email{\tt wus687@nenu.edu.cn}
\author[Bui Van Dinh]{Bui Van Dinh}
\address[Bui Van Dinh]{Department of Mathematics, Le Quy Don Technical University, Hanoi, Vietnam}
\email{\tt vandinhb@gmail.com; vandinhb@lqdtu.edu.vn}
\author[Liguo Jiao]{Liguo Jiao}
\address[Liguo Jiao]{Academy for Advanced Interdisciplinary Studies, Northeast Normal University, Changchun 130024, China}
\email{\tt jiaolg356@nenu.edu.cn; hanchezi@163.com}
\author[Do Sang Kim]{Do Sang Kim$^{\ast}$}
\address[Do Sang Kim]{Department of Applied Mathematics, Pukyong National University, Busan 48513, Korea}
\email{\tt dskim@pknu.ac.kr}
\author[Wensheng Zhu]{Wensheng Zhu$^{\ast}$}
\address[Wensheng Zhu]{Key Laboratory for Applied Statistics of MOE, School of Mathematics and Statistics, Northeast Normal University, Changchun 130024, China}
\email{\tt wszhu@nenu.edu.cn}
\thanks{$^{\ast}$Corresponding Authors}
\keywords{DC programs, Kurdyka--{\L}ojasiewicz inequality, proximal mapping, critical points, variable selection}
\subjclass[2020]{49J52, 49J53, 65K10, 49M37}
\begin{document}

\maketitle
\begin{dedication}
\begin{center}
{\it\small Dedicated to Professor Tamaki Tanaka on the occasion of his 65th birthday with respect}
\end{center}
\end{dedication}

\begin{abstract} 
We consider a minimization problem of the form $P(\varphi, g, h):$
$$\min\left\{f(x):= \varphi(x) + g(x) - h(x) \colon x \in \mathbb{R}^n\right\},$$ 
where $\varphi$ is a differentiable function and $g,$ $h$ are convex functions, and introduce iterative methods to finding a critical point of $f$ when $f$ is differentiable. 
We show that the point computed by proximal point algorithm at each iteration can be used to determine a descent direction for the objective function at this point. 
This algorithm can be considered as a combination of proximal point algorithm together with a linesearch step that uses this descent direction.
We also study convergence results of these algorithms and the inertial proximal methods proposed by Maing$\acute{e}$ and Moudafi (SIAM J. Optim. {\bf 19}(2008), 397--413) under the main assumption that the objective function satisfies the Kurdika--{\L}ojasiewicz property.
The proposed algrithm is then applied to solve the variable selection problem in linear regression.
\end{abstract}


\section{Introduction}\label{Sec:1}
Let $\Bbb R^n$ be an $n$-dimensional Euclidean space equipped with the inner product $\langle\cdot, \cdot\rangle$ and the associated norm $\|\cdot\|$. Let $f:\Bbb R^n\rightarrow\Bbb R\cup\{+\infty\}$ be a nonconvex function such that $f$ can be decomposed in the form
               $$f(x)=\varphi(x)+g(x)-h(x),$$
 where $\varphi:\Bbb R^n\rightarrow\Bbb R$ is a continuously differentiable function (not necessarily convex) and $g, h:\Bbb R^n\rightarrow\Bbb R\cup\{+\infty\}$ are convex, proper lower semicontinuous functions. We consider the following optimization problem
\begin{align}\label{problem}
\min_{x\in\Bbb R^n}\ \left\{f(x):=\varphi(x)+g(x)-h(x) \right\}.\tag{$P(\varphi, g, h)$}
\end{align}
The problem in this form has been investigated in some recent papers, such as Maing\'{e} and Moudafi \cite{MM} and An and Nam \cite{AN}. The special structure of this problem allows us the use of the powerful tools in convex analysis and convex optimization. The differentiability of $\varphi$ and the convexity of $g$ and $h$ of the objective function would be employed to develop appropriate tools from both theoretical and algorithmic point of views.

When $\varphi$ is convex (or $\varphi\equiv 0$), the function $f$ is called a {\it DC function} ({\bf D}ifference of two {\bf C}onvex function). It is worth mentioning that the class of DC functions contains all lower  $\mathcal{C}^2$ function and is closed under all operations usually considered in optimization. 
Some theoretical aspects, such as optimality conditions and duality theorems, related to DC programs are studied in the literature; see, e.g., \cite{FK, HK, SK}.

For solving DC programs from the convex analysis approach, DC algorithms (DCA), based on local optimality conditions and duality in DC programming, have been introduced by T. Pham Dinh \cite{PS} in 1986 as an extension of the subgradient algorithms to DC programming and extensively developed by H.A. Le Thi and T. Pham Dinh \cite{TA2} since 1994. Since then many authors have contributed to providing mathematical foundation for the algorithm and making it accessible for application, see \cite{AT1, AT2, ATM, ANT, DC, TA1} and references quoted therein.

One of the most important methods to handle ill-posed problems is the celebrated proximal point method. 
This method was firstly introduced by Martinet \cite{MAR} in 1970 for solving convex minimization problems and then extensively developed by Rockafellar \cite{ROC} to finding a zero point of a maximal monotone inclusion problem; see \cite{Beck2017,ParikhBoyd2014} for more detailed information. 
Since then, many researchers have succeeded in applying this method for solving many other problems, such as variational inequality problems \cite{FP} and equilibrium problems \cite{Mou}. 
Moreover, Sun {\it et al.} \cite{SSC} (see also \cite{ABB, BSM, MM2, SONS} for more proximal point algorithms) and An {\it et al.} \cite{AN} applied the proximal point method to DC optimization and problem \eqref{problem}, respectively.

It is obvious that, with a suitable DC decomposition, DCA becomes a proximal point algorithm. Very recently, Artacho {\it et al.} \cite{AFV} introduced the so-called {\it boosted DC algorithm} with backtracking for solving differentiable DC programming. This method can be considered as a combination of DCA and the descent algorithm proposed by Fukushima and Mine \cite{MMH,MH} to force the value of the objective function at each iteration reduces more than that performed by DCA.

Along with proximal point algorithm \cite{AN}, the inertial proximal method was proposed for solving \eqref{problem} by Maing\'{e} and Moudafi \cite{MM} (see also \cite{Alv, AA} for more inertial proximal methods). Although this algorithm has been known since 2008, its convergence analysis for general classes of difference functions, to our best knowledge, is still an open research question.

In this paper, we first introduce an algorithm called {\it boosted proximal point algorithm} to finding a stationary point of \eqref{problem} when $f$ is differentiable. 
This algorithm can be seen as a combination of the proximal point method and the descent algorithm proposed by Fukushima and Mine \cite{MMH} to make the value of the objective function $f$ at each iteration reduce much more than that in  the proximal point algorithm. The global convergence of the proposed algorithm and its convergence rate are obtained under the main assumption that the objective function satisfies {\L}ojasiewicz inequality \cite{LS}. 
Based on the method developed recently in \cite{BAA}, we then prove the global convergence of the inertial proximal algorithm for \eqref{problem} provided that $f$ posses the Kurdyka--{\L}ojasiewicz property.

The rest of paper is organized as follows. In Section \ref{Sec:2}, some tools of variational analysis are recalled. The boosted proximal point algorithm for \eqref{problem} and its convergent analysis are presented in Section \ref{Sec:3}. Section \ref{Sec:4} is devoted to presenting the convergence of the inertial proximal algorithm for \eqref{problem}. 
A numerical example is given in Section \ref{Sec:5}.
An application to solve the variable selection problem in linear regression is studied by using the proposed algorithm in Section \ref{Sec:6}.
Conclusions are given in Section \ref{Sec:7}.

\section{Preliminaries}\label{Sec:2}
This section contains the necessary preliminaries needed throughout the paper. We start with generalized differentiation for nonsmooth functions referring the reader to the monographs \cite{CF1, MB1, RRT} for more details and commentaries.

Let us denote the nonnegative orthant in $\Bbb R^n$  by $\Bbb R_{+}^n = [0, \infty)^n$ and $\Bbb B(x, r)$ the closed ball of center $x$ and radius $r > 0$. The gradient of a differentiable function $f: \Bbb R^n \rightarrow \Bbb R^m$ at some point $x\in \Bbb R^n$ is denoted by $\nabla f(x) \in \Bbb R^{n \times m}$.

For an extended-real-value function $f :\Bbb R^n \rightarrow \bar{\Bbb R} := \Bbb R \cup \{ +\infty \}$, the domain of $f$ is the set $\dom f = \{x \in \Bbb R^n \colon f(x) < + \infty\}$, moreover $f$ is said to be {\it proper} if its domain is nonempty, and $f$ is said to be {\it coercive} if $f(x) \rightarrow + \infty$, whenever $\| x\| \rightarrow +\infty$.

Let $\Omega\subset \Bbb R^n$ and $\Omega\not= \emptyset$, we use the notation $d(\bar x; \Omega)$ to denote the distance from $\bar x$ to $\Omega$, i.e.,
$$d(\bar x; \Omega)=\inf\limits_{x\in\Omega}\| x-\bar x\|.$$

Recall that a function $f:\Bbb R^n\rightarrow \Bbb{\bar R}$ is said to be {\it strongly convex} with $\tau>0$ if
$$f\left(\lambda x+(1-\lambda)y\right)\leq \lambda f(x)+(1-\lambda)f(y) -\frac{\tau}{2}\lambda (1-\lambda)\|x-y\|^2,$$
for all $x,y\in\Bbb R^n$ and $\lambda\in (0,1)$. Moreover, if $\tau=0$, $f$ is said to be {\it convex}. Clearly, $f$ is strongly convex if and only if $f-\frac{\tau}{2}\|\cdot\|^2$ is convex; see, e.g., \cite[Theorem 5.17]{Beck2017}.

The function $f:\Bbb R^n\rightarrow \Bbb R$ is called {\it Lipschitz continuous} if there exists a positive constant $L$ such that
$$\|f(x)-f(y)\|\leq L\|x-y\|, \hbox{ for all $x,y\in\Bbb R^n$.}$$
Further, $f$ is called {\it locally Lipschitz continuous} if for every $x\in\Bbb R^n$, there exists a neighborhood $V$ of $x$ such that $f$ restricted to $V$ is Lipschitz continuous.

Given a lower semicontinuous function $f : \Bbb R^n \rightarrow \bar{\Bbb R}$, we use the symbol $z \xrightarrow{f} x$ to indicate that $z \rightarrow x$ and $f(z) \rightarrow f(x)$. The {\it Fr\'{e}chet subdifferential} of $f$ at $\bar x \in  dom f$ is defined by
$$\partial^Ff(\bar x)=\left\{v\in \Bbb R^n \colon \liminf\limits_{x\rightarrow\bar x}\frac{f(x)-f(\bar x)-\langle v, x-\bar x\rangle}{\| x-\bar x\|}\geq 0\right\}.$$
Set $\partial^Ff(\bar x):=\emptyset$ if $\bar x\not\in \dom f$. 
It is worth noting that the Fr\'{e}chet subdifferential mapping does not have a closed graph, and it is unstable computationally. Based on the Fr\'{e}chet subdifferential, the {\it limiting subdifferential} of $f$ at $\bar x\in dom f$ (known also as the {\it basic}, or {\it Mordukhovich subdifferential}) is defined by
$$\partial^Mf(\bar x)=\limsup\limits_{x\xrightarrow{f} \bar x}\partial^Ff(x):=\left\{v\in \Bbb R^n \colon \exists x^k\xrightarrow{f}\bar x, \ v^k\in \partial^Ff(x^k),\ v^k\rightarrow v \right\}.$$
Set also $\partial^Mf(\bar x):=\emptyset$ if $\bar x\not\in \dom f$. It follows from the definition that the following rubstness/closedness property of $\partial^Mf$ holds:
$$\left\{ v\in \Bbb R^n \colon \exists x^k\xrightarrow{f} \bar x,\ v^k\rightarrow v,\ v^k\in \partial^Mf(x^k)\right\}=\partial^Mf(\bar x).$$
Observe that from \cite[Theorem 8.6]{RWR}, one has $\partial^Ff(x)\subset\partial^Mf(x)$ for every $x\in \Bbb R^n$, where the first set is closed and convex while the second one is closed. If $f$ is differentiable at $\bar x$, then $\partial^Ff(\bar x)=\{\nabla f(\bar x)\}$, and if $f$ is continuously differentiable on a neighborhood of $\bar x$, then $\partial^Mf(\bar x)=\{\nabla f(\bar x)\}$. For convex function $f$, both the Fr\'{e}chet and limiting subdifferentials reduce to the classical subdifferential in the sense of convex analysis:
$$\partial f(\bar x)=\left\{v\in \Bbb R^n \colon \langle v, x-\bar x\rangle\leq f(x)-f(\bar x),\ \forall x\in \Bbb R^n\right\}.$$
It is necessary to mention another subdifferential named the {\it Clarke subdifferential} defined in \cite{CF1}, which was based on generalized directional derivatives, and it is also worth noting that Clarke subdifferential of a locally Lipschitz continuous function $f$ around $\bar x$ can be represented as the limiting subdifferential: $\partial^Cf(\bar x)=\co\partial^Mf(\bar x)$, where $\co\Omega$ denotes the convex hull of an arbitrary set $\Omega$.
\begin{proposition}
\cite[pp. 304]{RWR} Let $f=g+h,$ where $g$ is lower semicontinuous and $h$ is continuously differentiable on a neighborhood of $\bar x$. Then
$$\partial^Ff(\bar x)=\partial^Fg(\bar x)+\nabla h(\bar x)\ \hbox{ and } \ \partial^Mf(\bar x)=\partial^Mg(\bar x)+\nabla h(\bar x).$$
\end{proposition}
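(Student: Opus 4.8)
The plan is to establish the Fréchet identity first by a direct estimate, and then bootstrap to the limiting identity via the $\limsup$ construction together with the continuity of $h$ and $\nabla h$. For the Fréchet case I would fix $\bar x\in \mathrm{dom}\,f$ and exploit the differentiability of $h$ at $\bar x$, namely $h(x)=h(\bar x)+\langle\nabla h(\bar x),x-\bar x\rangle+o(\|x-\bar x\|)$. Substituting $f=g+h$ into the difference quotient defining $\partial^Ff(\bar x)$, the contribution of $h$ splits into the linear term $\langle\nabla h(\bar x),x-\bar x\rangle$ and a remainder which, after division by $\|x-\bar x\|$, tends to $0$. Hence for any $v$,
$$\liminf\limits_{x\to\bar x}\frac{f(x)-f(\bar x)-\langle v,x-\bar x\rangle}{\|x-\bar x\|}=\liminf\limits_{x\to\bar x}\frac{g(x)-g(\bar x)-\langle v-\nabla h(\bar x),x-\bar x\rangle}{\|x-\bar x\|},$$
so that $v\in\partial^Ff(\bar x)$ holds if and only if $v-\nabla h(\bar x)\in\partial^Fg(\bar x)$; this is exactly the desired equality $\partial^Ff(\bar x)=\partial^Fg(\bar x)+\nabla h(\bar x)$.

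For the limiting case I would use the definition $\partial^Mf(\bar x)=\limsup_{x\xrightarrow{f}\bar x}\partial^Ff(x)$ and apply the Fréchet identity at points near $\bar x$. Because $h$ is $\mathcal{C}^1$ on a whole neighborhood of $\bar x$, the Fréchet identity $\partial^Ff(x)=\partial^Fg(x)+\nabla h(x)$ is valid for every $x$ in that neighborhood, which is precisely where the neighborhood hypothesis (rather than mere differentiability at the single point $\bar x$) is essential. Take $v\in\partial^Mf(\bar x)$; then there exist $x^k\xrightarrow{f}\bar x$ and $v^k\in\partial^Ff(x^k)$ with $v^k\to v$, and I may write $v^k=w^k+\nabla h(x^k)$ with $w^k\in\partial^Fg(x^k)$. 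Continuity of $h$ converts the $f$-attentive convergence into $g$-attentive convergence: from $x^k\to\bar x$ and $f(x^k)\to f(\bar x)$ one gets $g(x^k)=f(x^k)-h(x^k)\to f(\bar x)-h(\bar x)=g(\bar x)$, i.e. $x^k\xrightarrow{g}\bar x$. Continuity of $\nabla h$ then yields $w^k=v^k-\nabla h(x^k)\to v-\nabla h(\bar x)$, whence $v-\nabla h(\bar x)\in\partial^Mg(\bar x)$, so $v\in\partial^Mg(\bar x)+\nabla h(\bar x)$. The reverse inclusion follows by the symmetric argument applied to $g=f+(-h)$, with $-h$ again $\mathcal{C}^1$.

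The computations are routine; the one place that requires care is the bookkeeping of the convergence modes in the limiting case. The limiting subdifferential is built from $f$-attentive convergence $x^k\xrightarrow{f}\bar x$, whereas the conclusion is phrased for $g$, so I must verify that continuity of $h$ genuinely transports this to $x^k\xrightarrow{g}\bar x$ (and conversely), and that $\nabla h(x^k)\to\nabla h(\bar x)$ so that the splitting $v^k=w^k+\nabla h(x^k)$ survives passage to the limit. This is the main, though mild, obstacle; everything else is an immediate consequence of the Fréchet identity and the smoothness of $h$.
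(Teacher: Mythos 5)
The paper offers no proof of this proposition at all: it is quoted verbatim as a known result from Rockafellar--Wets (\cite{RWR}, p.~304, essentially Exercise 8.8(c) there), so there is no in-paper argument to compare yours against. Your proof is correct and self-contained, and it is the standard argument for this sum rule: the Fr\'{e}chet identity follows from the first-order expansion of $h$ at $\bar x$ (only differentiability at the single point is needed there, as you note), and the limiting identity follows by applying that identity along sequences, using continuity of $h$ to convert $f$-attentive convergence into $g$-attentive convergence and continuity of $\nabla h$ to pass to the limit, with the reverse inclusion obtained by swapping the roles of $f$ and $g$ via $g=f+(-h)$. The only cosmetic remarks: in the symmetric step you should observe that $f=g+h$ is itself lower semicontinuous near $\bar x$ (so the forward argument legitimately applies to the pair $(f,-h)$), and that when $\bar x\notin\mathrm{dom}\,f$ both sides of each identity are empty by the paper's convention, since $\mathrm{dom}\,f$ and $\mathrm{dom}\,g$ coincide near $\bar x$; neither point is a gap.
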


\begin{proposition}
\cite[pp. 422]{RWR} If a lower semicontinuous function $f:\Bbb R^n\rightarrow\bar{\Bbb R}$ has a local minimum at $\bar x\in dom f$, then $0\in\partial^Ff(\bar x)\subset\partial^Mf(\bar x)$. In the convex case, this condition is not only necessary for a local minimum but also sufficient for a global minimum.
\end{proposition}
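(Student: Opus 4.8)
The plan is to establish the two assertions separately by direct appeal to the definitions, since no machinery beyond the Fr\'echet subdifferential and the classical convex subdifferential is needed. The inclusion $\partial^Ff(\bar x)\subset\partial^Mf(\bar x)$ has already been recorded in the text above (via Theorem 8.6 of \cite{RWR}), so the first assertion reduces to showing that $0\in\partial^Ff(\bar x)$.

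For that, I would take $v=0$ in the definition of the Fr\'echet subdifferential and verify the defining inequality. Since $\bar x$ is a local minimum, there is a neighborhood $V$ of $\bar x$ with $f(x)\geq f(\bar x)$ for all $x\in V$. Then for $x\in V$ with $x\neq\bar x$ the quotient
$$\frac{f(x)-f(\bar x)-\langle 0, x-\bar x\rangle}{\|x-\bar x\|}=\frac{f(x)-f(\bar x)}{\|x-\bar x\|}$$
is nonnegative, because its numerator is nonnegative and its denominator is positive. Taking the liminf as $x\to\bar x$ therefore yields a value $\geq 0$, which is precisely the condition $0\in\partial^Ff(\bar x)$, and the inclusion then delivers $0\in\partial^Mf(\bar x)$.

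For the convex case, recall (as noted above) that $\partial^Ff$ and $\partial^Mf$ both coincide with the classical convex subdifferential $\partial f$, so necessity is a special instance of the argument just given. For sufficiency, suppose $0\in\partial f(\bar x)$; the defining inequality of the convex subdifferential gives $\langle 0, x-\bar x\rangle\leq f(x)-f(\bar x)$, i.e. $f(x)\geq f(\bar x)$, for every $x\in\Bbb R^n$, so $\bar x$ is a global minimizer. The only point demanding care is the liminf step: one must restrict to $x\neq\bar x$ and use that $f(x)\geq f(\bar x)$ holds merely locally, but since the liminf in the definition is itself a local quantity this is exactly what is required. There is no genuine obstacle here, as the statement is the nonsmooth Fermat rule and its proof is a routine unwinding of the definitions.
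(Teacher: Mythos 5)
Your proof is correct and complete: taking $v=0$ in the definition of $\partial^Ff(\bar x)$ and using local minimality makes the difference quotient nonnegative near $\bar x$, so its liminf is $\geq 0$; the inclusion $\partial^Ff(\bar x)\subset\partial^Mf(\bar x)$ is already recorded in the preliminaries; and in the convex case the identification of both subdifferentials with the classical $\partial f$ reduces sufficiency to the defining inequality $\langle 0, x-\bar x\rangle\leq f(x)-f(\bar x)$ for all $x\in\Bbb R^n$. Note that the paper itself offers no proof of this proposition---it is stated purely as a citation to \cite{RWR}, p.~422---so your direct unwinding of the definitions is precisely the routine argument that the citation stands in for, and there is nothing in the paper to compare it against beyond that.
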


Note that for a finite convex function $f$ on $\Bbb R^n$, if $y^k\in \partial f(x^k)$ for all $k$ and $\{ x^k\}$ is bounded, then the sequence $\{ y^k\}$ is also bounded, one can refer Definition 5.14, Proposition 5.15 and Theorem 9.13 in \cite{RWR}.

To establish our convergence results, we need the {\it Kurdyka--{\L}ojasiewicz property} (briefly say K{\L} property) defined as follows (see also \cite{PAB, HJPA, BAA}). Before that, let us recall the definitions of semi-algebraic set and function. A subset $\Omega$ of $\Bbb R^n$ is called {\it semi-algebraic} (see \cite{Ha2017}) if it can be represented as a finite union of sets of the form
$$\left\{x\in\Bbb R^n\colon p_i(x)=0,\ q_i(x)<0,\ \textrm{for all}\ i=1,\ldots,m \right\},$$
where $p_i$ and $q_i$ for $i=1,\ldots,m$ are polynomial functions. A function $f$ is said to be {\it semi-algebraic} if its graph is a semi-algebraic subset of $\Bbb R^{n+1}$.
\begin{definition}\label{pro3}{\rm
A lower semicontinuous function $f:\Bbb R^n\rightarrow\bar{\Bbb R}$ satisfies the {\it K{\L} property} at $x^*\in \dom\partial^Mf$ if there exist $\epsilon>0$, a neighborhood $U$ of $x^*$ and a continuous concave function $\theta:[0, \epsilon[\rightarrow[0, +\infty[$ with
\begin{itemize}
\item[{\bf (a)}] $\theta(0)=0,$ 
\item[{\bf (b)}] $\theta>0$ on $]0, \epsilon[,$ 
\item[{\bf (c)}] $\theta$ is of class $\mathcal{C}^1$ on $]0, \epsilon[,$
\item[{\bf (d)}] for every $x\in U$ and $f(x^*)<f(x)<f(x^*)+\epsilon,$ one has $$\theta'\left(f(x)-f(x^*)\right) d\left(0; \partial^Mf(x)\right)\geq 1.$$
\end{itemize}
}\end{definition}
It is known that a proper lower semicontinuous semi-algebraic function always satisfies the K{\L} property, one can see \cite{HJPA, BDLS,Ha2017}. Moreover, $f$ is said to satisfy the {\it strong Kurdyka--{\L}ojasiewicz property} at $x^*$ if {\bf (a)}$\sim${\bf (d)} hold for Clarke subdifferential $\partial^Cf(x)$. In fact, very recently, Bolte {\it et al.} \cite[Theorem 14]{BDLS} pointed out that the class of definable functions, which contains the class of semi-algebraic functions, satisfies the strong Kurdyka--{\L}ojasiewicz property at each point of $\dom\partial^Cf$.

In virtue of \cite[Lemma 2.1]{HJPA}, we know that a proper lower semicontinuous function $f:\Bbb R^n\rightarrow\bar{\Bbb R}$ has the K{\L} property at any point $\bar x\in \Bbb R^n$ such that $0\not\in\partial^Mf(\bar x)$. If the function $f:\Bbb R^n\rightarrow\Bbb R$ is differentiable and $\theta(t)=Mt^{\kappa}$, where $M>0$ and $\kappa\in[0, 1)$, then we have the following definition which is one special case of Definition \ref{pro3}.
\begin{definition}
{\rm
If $f:\Bbb R^n\rightarrow\Bbb R$ is a differentiable function, then $f$ is said to have the {\it {\L}ojasiewicz property} if for any critical point $\bar x$, there exist constants $M>0$, $\epsilon>0$ and $\kappa\in[0,1)$ such that
\begin{align*}
|f(x)-f(\bar x)|^{\kappa}\leq M\|\nabla f(x)\|,\;\; \hbox{for all }\; x\in\Bbb B(\bar x, \epsilon),  
\end{align*}
where we adopt the convention $0^0=1$ and the constant $\kappa$ is called {\it {\L}ojasiewicz exponent} of $f$ at $\bar x$.}
\end{definition}
The reader is referred to see \cite{LiPong2018} for some calculus of the {\L}ojasiewicz exponent for some classes of functions.
On the other hand, a differentiable function $f:\Bbb R^n\rightarrow\Bbb R$ is said to be {\it real analytic} if for every $x\in\Bbb R^n$, $f$ could be represented by a convergent power series in some neighbourhood of $x$. In addition, \cite{LS} showed that every real analytic function $f:\Bbb R^n\rightarrow\Bbb R$ satisfies the {\L}ojasiewicz property with exponent $\kappa\in[0,1)$.

We will also employ the following useful lemma to obtain bounds on the rate of convergence of the sequence generated by {\bf Algorithm \ref{al::main}} (see Section~\ref{Sec:3}). This lemma appears within the proof in \cite[Theorem 2]{AHBJ} for specific values of $\mu$ and $\nu$. For convenience, we give a brief proof, see also \cite[Lemma 3.1]{AFV} and \cite[Theorem 3.3]{ANT}.
\begin{lemma}\label{lem1}
Let $\{ t_k\}$ be a sequence in $\Bbb R_+$ and let $\mu$ and $\nu$ be some positive constants. Suppose that $t_k\rightarrow 0$ and the sequence satisfies
\begin{align}
t_k^{\mu}\leq \nu(t_k-t_{k+1}), \;\;\; \hbox{for all $k$ sufficiently large.} \label{ineq2}
\end{align}
Then 
\begin{itemize}
\item[{\bf (a)}]  If $\mu=0$, the sequence $\{t_k\}$ converges to $0$ in a finite number of steps.
\item[{\bf (b)}]  If $\mu\in (0, 1]$, the sequence $\{t_k\}$ converges linearly to $0$ with rate $1-\frac{1}{\nu}$.
\item[{\bf (c)}]  If $\mu>1$, there exists $\gamma>0$ such that for all $k$ sufficiently large $$t_k\leq \gamma k^{-\frac{1}{\mu-1}}.$$
\end{itemize}
\end{lemma}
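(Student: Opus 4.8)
The plan is to treat the three regimes separately after two preliminary reductions. First, since $t_k^\mu \ge 0$ and $\nu > 0$, the defining inequality \eqref{ineq2} forces $t_k - t_{k+1} \ge 0$ for all large $k$, so $\{t_k\}$ is eventually nonincreasing; let $k_0$ be an index beyond which both \eqref{ineq2} and this monotonicity hold. Second, if $t_{k_1} = 0$ for some $k_1 \ge k_0$, then monotonicity together with nonnegativity gives $t_k = 0$ for all $k \ge k_1$, i.e. finite termination, a conclusion at least as strong as (a)--(c); hence I may assume $t_k > 0$ for every $k \ge k_0$ and argue with strictly positive terms.

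For (a), with $\mu = 0$ the inequality reads $1 \le \nu(t_k - t_{k+1})$, so each admissible step decreases $t_k$ by at least $1/\nu$. Summing from $k_0$ to $k_0+m-1$ yields $t_{k_0} - t_{k_0+m} \ge m/\nu$, which is incompatible with $t_k \ge 0$ once $m > \nu t_{k_0}$; thus the positivity assumption must fail, and by the reduction above the sequence reaches $0$ in finitely many steps. For (b), since $t_k \to 0$ I increase $k_0$ if necessary so that $t_k \le 1$ for $k \ge k_0$; then $\mu \in (0,1]$ gives $t_k \le t_k^\mu$, and combining with \eqref{ineq2} produces $t_k \le \nu(t_k - t_{k+1})$, i.e. $t_{k+1} \le (1 - \tfrac1\nu)\,t_k$. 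The same chain also yields $t_k \le \nu t_k$, forcing $\nu \ge 1$ and hence $q := 1 - \tfrac1\nu \in [0,1)$; iterating $t_{k+1} \le q\,t_k$ from $k_0$ delivers the claimed linear rate.

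The substantive case is (c), $\mu > 1$, which I would handle with a convex Lyapunov potential. Set $\alpha := \mu - 1 > 0$ and $\phi(s) := s^{-\alpha}$, which is convex and decreasing on $(0,\infty)$. The supporting-line inequality at $t_k$ gives $\phi(t_{k+1}) - \phi(t_k) \ge \phi'(t_k)(t_{k+1}-t_k) = \alpha\, t_k^{-\alpha-1}(t_k - t_{k+1})$, and inserting the lower bound $t_k - t_{k+1} \ge \tfrac1\nu t_k^{\mu}$ from \eqref{ineq2} makes the exponents cancel exactly, since $-\alpha-1+\mu = 0$, leaving the clean per-step increment $\phi(t_{k+1}) - \phi(t_k) \ge \alpha/\nu$. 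Telescoping from $k_0$ then gives $t_k^{-\alpha} = \phi(t_k) \ge \tfrac{\alpha}{\nu}(k - k_0)$, and solving for $t_k$ yields $t_k \le \big(\tfrac{\nu}{\alpha(k - k_0)}\big)^{1/\alpha}$; absorbing $k_0$ for large $k$ produces a constant $\gamma$ with $t_k \le \gamma\,k^{-1/(\mu-1)}$, as required.

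The main obstacle --- and essentially the only place an idea is needed --- is choosing the right potential in (c): the exponent $-\alpha$ in $\phi(s)=s^{-\alpha}$ is dictated precisely by the requirement that $t_k^{-\alpha-1}\cdot t_k^{\mu}$ collapse to a constant, which is what converts the nonlinear recursion into a telescoping arithmetic one. The remaining points are routine: verifying convexity of $\phi$ so that its tangent lies below the graph, and checking that the boundary exponent $\mu = 1$ (shared by (b) and (c)) and the degenerate reaching-zero situation are consistently covered by the monotonicity reduction above.
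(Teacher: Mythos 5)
Your proof is correct and takes essentially the same route as the paper's: cases (a) and (b) are handled identically, and in (c) your tangent-line inequality for the convex potential $\phi(s)=s^{1-\mu}$ is just a restatement of the paper's integral comparison $(t_k-t_{k+1})\,t_k^{-\mu}\le\int_{t_{k+1}}^{t_k}t^{-\mu}\,dt$ --- both telescope $t_k^{1-\mu}$ with the same per-step increment $(\mu-1)/\nu$. The only cosmetic differences are your upfront monotonicity/zero-term reductions and the added observation that $\nu\ge 1$ is forced in case (b).
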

\begin{proof}
{\bf (a)} If $\mu=0$, then from \eqref{ineq2}, one has $0\leq t_{k+1}\leq t_k-\frac{1}{\nu},$ which implies {\bf (a)}.

{\bf (b)} Now, we suppose that $\mu\in (0, 1]$. As $t_k\rightarrow 0$, we obtain that $t_k<1$ for all $k$ large enough, then from \eqref{ineq2},
$$t_k\leq t_k^{\mu}\leq\nu(t_k-t_{k+1}),$$
thus, $t_{k+1}\leq(1-\frac{1}{\nu})t_k$, which means that $\{t_k\}$ converges to $0$ with rate $1-\frac{1}{\nu}$ linearly.

{\bf (c)} Assume that $\mu>1$, if $t_k=0$ for some $k$, then from \eqref{ineq2}, we have $t_{k+1}=0$, which points out that the sequence $\{ t_k\}$ converges to zero in a finite number of steps and {\bf (c)} trivially holds. Therefore we can assume that $t_k>0, \forall k$.

Consider the decreasing function $u:(0, +\infty)\rightarrow\Bbb R$ defined by $u(t)=t^{-\mu}$. Assume that \eqref{ineq2} holds for all $k\geq N$, for some positive integer $N$, then for $k\geq N$, we get
$$\frac{1}{\nu}\leq(t_k-t_{k+1})u(t_k)\leq\int_{t_{k+1}}^{t_k}u(t)dt=\frac{t_k^{1-\mu}-t_{k+1}^{1-\mu}}{1-\mu}.$$
Since $\mu>1$, then
$$t_{k+1}^{1-\mu}-t_{k}^{1-\mu}\geq\frac{\mu-1}{\nu}, \;\;\hbox{for all } k\geq N.$$
Summing up $k$ from $N$ to $j-1\geq N$, one has
$$t_{j}^{1-\mu}-t_{N}^{1-\mu}\geq\frac{\mu-1}{\nu}(j-N),$$
which gives
$$t_j\leq\left(t_N^{1-\mu}+\frac{\mu-1}{\nu}(j-N)\right)^{\frac{1}{1-\mu}}, \;\;\hbox{ for all } j\geq N+1.$$
As a conclusion, there exists $\gamma>0$ such that $t_k\leq \gamma k^{-\frac{1}{\mu-1}}$ for all $k$ sufficiently large.
\end{proof}

We close this section by recalling the well known descent lemma for a smooth function with Lipschitz continuous gradient, which says that it can be upper bounded by a certain quadratic function; see, \cite[Lemma 5.7]{Beck2017} for the  proof (see also \cite{N,OR}).
\begin{lemma}\label{lem2}
Suppose that $g: \Bbb R^n \rightarrow \Bbb R$ is a differentiable function and $\Omega$ is a nonempty convex subset of $\Bbb R^n$ such that $\nabla g$ satisfies the Lipschitz condition on $\Omega$ with constant $L$. Then for all $x, y \in \Omega$ we have:
\begin{align*}
g(y)\leq g(x)+\left\langle\nabla g(x), y-x \right\rangle+\frac{L}{2}\|y-x\|^2. 
\end{align*}
\end{lemma}

\section{A proximal-type algorithm with linesearch}\label{Sec:3}

In this section, we introduce a proximal point algorithm with linesearch to solve the problem \eqref{problem}, and establish its convergence and convergent rate.
%
%
%

\begin{framed}
\begin{algorithm}[Solving $P(\varphi, g, h)$]\label{al::main} 
$\quad$\\
{\sf
{\bf Initialization}. Pick $x^0\in\Bbb R^n$, choose parameters $\eta\in (0,1)$, $\alpha>0$, $\{ \lambda_k\}\subset (0, +\infty)$.\\
{\bf Iteration $k$} ($k = 0, 1, 2, \ldots$).  Having $x^k$ do the following steps:

{\sc Step 1}. Solve the following strongly convex program
$$\min\limits_{x\in{\Bbb R^n}}\left\{g(x)-\langle\nabla h(x^k)-\nabla\varphi(x^k), x-x^k\rangle+\frac{\lambda_k}{2}\|x-x^k\|^2 \right\},$$ to get the unique solution $y^k$.

Set $d^k=y^k-x^k$. If $d^k=0$, then stop. Otherwise, go to {\sc Step 2}.

{\sc Step 2}. (Armijo  linesearch rule)
\begin{align}
&\hbox{Find $m_k$ as the smallest positive integer number $m$ such that \qquad\qquad\quad}\nonumber\\
& \qquad\qquad \qquad f(y^k+\eta^m  d^k) \leq f(y^k)-\alpha\eta^m\|d^k\|^2. \nonumber
\end{align}
Set $\eta_k=\eta^{m_k}$ and $x^{k+1}=y^k+\eta_kd^k$ and go to {\bf Iteration $k$} with $k$ replaced by $k+1$.

}
\end{algorithm}
\end{framed}

For the above proposed Algorithm \ref{al::main}, we have the following results.
\begin{theorem}\label{theo1}
Suppose that $\Omega$ is a convex set containing $\{x^k\}, \{y^k\}$ such that $\nabla\varphi $ is Lipschitz continuous with constant $L_1$ on $\Omega$ and $ L_1+2\hat{\lambda}\leq\lambda_k\leq\bar{\lambda}$ where $\hat{\lambda}>0,$ then
\begin{itemize}
\item[{\bf (a)}]   $f(y^k)\leq f(x^k)-\frac{\lambda_k-L_1}{2}\|y^k-x^k\|^2.$
  \item[{\bf (b)}]  The linesearch is well defined.
  \item[{\bf (c)}]  $f(x^{k+1}) \leq  f(x^k) - \left(\frac{\lambda_k - L_1}{2} + \alpha\eta_k \right)\|d^k\|^2,$ the sequence $\{f(x^k)\}$ is strictly decreasing and convergent.
  \item[{\bf (d)}]  Any accumulation point of $\{x^k\}$ is a stationary point of $f$.
  \item[{\bf (e)}]  $\sum\limits_{k=0}^{\infty}\| d^k\|^2<\infty$ and $\sum\limits_{k=0}^{\infty}\| x^{k+1}-x^k\|^2<\infty$.
\end{itemize}
\end{theorem}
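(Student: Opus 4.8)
The plan is to extract everything from the optimality condition of the Step 1 subproblem, combined with the three structural facts available: the descent lemma (Lemma \ref{lem2}) for $\varphi$, the subgradient inequality for the convex $g$, and the convexity of $h$. Since $y^k$ is the minimizer of the strongly convex function $\Phi_k(x)=g(x)-\langle\nabla h(x^k)-\nabla\varphi(x^k),x-x^k\rangle+\frac{\lambda_k}{2}\|x-x^k\|^2$, I would first record its optimality condition $\xi^k:=\nabla h(x^k)-\nabla\varphi(x^k)-\lambda_k(y^k-x^k)\in\partial g(y^k)$. To prove (a) I would split $f(y^k)-f(x^k)$ into the three increments $\varphi(y^k)-\varphi(x^k)$, $g(y^k)-g(x^k)$, and $-(h(y^k)-h(x^k))$; bound the first by Lemma \ref{lem2}, the second by the convexity inequality $g(x^k)\geq g(y^k)+\langle\xi^k,x^k-y^k\rangle$, and the third by $h(y^k)\geq h(x^k)+\langle\nabla h(x^k),y^k-x^k\rangle$. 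The terms $\langle\nabla\varphi(x^k),d^k\rangle$ and $\langle\nabla h(x^k),d^k\rangle$ cancel, and what remains is $(\frac{L_1}{2}-\lambda_k)\|d^k\|^2$, which is even sharper than the claimed $-\frac{\lambda_k-L_1}{2}\|d^k\|^2$; since $\lambda_k\geq L_1+2\hat\lambda$, this quantity is strictly negative whenever $d^k\neq 0$.

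For (b) the essential point is that $d^k$ is a descent direction for $f$ at $y^k$. Using $\nabla g(y^k)=\xi^k$ (here $g$ is differentiable, so $\partial g(y^k)=\{\nabla g(y^k)\}$), I would compute $\langle\nabla f(y^k),d^k\rangle=\langle\nabla\varphi(y^k)-\nabla\varphi(x^k),d^k\rangle+\langle\nabla h(x^k)-\nabla h(y^k),d^k\rangle-\lambda_k\|d^k\|^2$. The first inner product is $\leq L_1\|d^k\|^2$ by Lipschitz continuity of $\nabla\varphi$, and the second is $\leq 0$ by monotonicity of $\nabla h$ (convexity of $h$), giving $\langle\nabla f(y^k),d^k\rangle\leq-(\lambda_k-L_1)\|d^k\|^2<0$. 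With this strict descent inequality, a standard contradiction argument gives termination of the Armijo test in finitely many steps: if it failed for every $m$, dividing by $\eta^m$ and letting $m\to\infty$ would force $\langle\nabla f(y^k),d^k\rangle\geq-\alpha\|d^k\|^2$. The delicate point, and the one I expect to be the main obstacle, is precisely the comparison of $\alpha$ with $\lambda_k-L_1$: the contradiction requires $\alpha<\lambda_k-L_1$, which the lower bound $\lambda_k\geq L_1+2\hat\lambda$ guarantees only if $\alpha$ is taken below $2\hat\lambda$. I would make this relationship explicit, since it is the single place where the stated hypotheses must be used carefully.

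Part (c) is then immediate by chaining (a) with the accepted Armijo step: $f(x^{k+1})=f(y^k+\eta_k d^k)\leq f(y^k)-\alpha\eta_k\|d^k\|^2\leq f(x^k)-(\frac{\lambda_k-L_1}{2}+\alpha\eta_k)\|d^k\|^2$, using $\|y^k-x^k\|=\|d^k\|$. Because $\frac{\lambda_k-L_1}{2}+\alpha\eta_k\geq\hat\lambda>0$, the sequence $\{f(x^k)\}$ is strictly decreasing, and, assuming $f$ is bounded below (for instance coercive), it converges. Telescoping (c) yields $\hat\lambda\sum_k\|d^k\|^2\leq\sum_k\big(f(x^k)-f(x^{k+1})\big)=f(x^0)-\lim_k f(x^k)<\infty$, so $\sum\|d^k\|^2<\infty$; and since $x^{k+1}-x^k=(1+\eta_k)d^k$ with $1+\eta_k\leq 2$, also $\sum\|x^{k+1}-x^k\|^2\leq 4\sum\|d^k\|^2<\infty$, which is (e).

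Finally, for (d), part (e) gives $\|d^k\|=\|y^k-x^k\|\to 0$. Let $x^*$ be the limit of a subsequence $x^{k_j}$; then $y^{k_j}\to x^*$ and $f(y^{k_j})\to f(x^*)$ by continuity. Setting $v^k:=\nabla\varphi(y^k)+\xi^k-\nabla h(y^k)=\big(\nabla\varphi(y^k)-\nabla\varphi(x^k)\big)+\big(\nabla h(x^k)-\nabla h(y^k)\big)-\lambda_k(y^k-x^k)$, Proposition \ref{pro1} applied to $f=g+(\varphi-h)$ shows $v^k\in\partial^M f(y^k)$. Along the subsequence, continuity of $\nabla\varphi$ and $\nabla h$, boundedness $\lambda_{k_j}\leq\bar\lambda$, and $\|d^{k_j}\|\to 0$ force $v^{k_j}\to 0$. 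The closedness (robustness) property of $\partial^M f$ recalled in Section 2 then delivers $0\in\partial^M f(x^*)$, i.e. $x^*$ is a stationary point of $f$ in the sense of Proposition \ref{pro2}, establishing (d).
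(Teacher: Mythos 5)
Your proposal is correct and, for parts (a), (c), (d), (e), follows essentially the same route as the paper: subproblem optimality plus Lemma \ref{lem2} and convexity of $h$ for (a), chaining the Armijo step with (a) for (c), telescoping for (e), and passing to the limit along the convergent subsequence for (d). Two deviations deserve comment. In (a), you use the optimality condition $\xi^k\in\partial g(y^k)$ and the subgradient inequality, which yields the sharper coefficient $\lambda_k-\frac{L_1}{2}$; the paper instead simply compares the values of the subproblem objective at $y^k$ and at $x^k$, obtaining $\frac{\lambda_k-L_1}{2}$ --- both suffice. The substantive difference is (b): your version is the careful one, and the ``delicate point'' you isolate is a genuine defect of the paper's own proof. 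The paper writes the failure of the linesearch as $f(y^k+\eta^m d^k)>f(y^k)$ for all $m$, silently discarding the term $-\alpha\eta^m\|d^k\|^2$, and thus derives the contradiction $\langle\nabla f(y^k),d^k\rangle>0$ with no restriction on $\alpha$. As you point out, the correct negation only yields $\langle\nabla f(y^k),d^k\rangle\geq-\alpha\|d^k\|^2$ in the limit, and the contradiction with the estimate $\langle\nabla f(y^k),d^k\rangle\leq-(\lambda_k-L_1)\|d^k\|^2$ requires $\alpha<\lambda_k-L_1$, which the hypotheses guarantee only when $\alpha<2\hat{\lambda}$. Since the theorem places no such restriction on $\alpha$ (and for large $\alpha$ the Armijo test can genuinely fail for every $m$), your explicit condition is a correction the paper needs, not a weakness of your argument. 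Two last small remarks: your closedness-of-$\partial^Mf$ argument in (d) avoids taking limits of $\nabla g(y^{k_i})$, which the paper does without justifying continuity of $\nabla g$; and both you and the paper invoke $\inf_{x\in\Bbb R^n}f(x)>-\infty$ in (c), an assumption not listed in the statement.
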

\begin{proof}
{\bf (a)} From Algorithm \ref{al::main}, we have
$$ g(x^k)\geq g(y^k)- \left\langle\nabla h(x^k), y^k-x^k \right\rangle + \left\langle\nabla\varphi(x^k), y^k-x^k \right\rangle+\frac{\lambda_k}{2}\| y^k-x^k\|^2.$$
In addition, by convexity of $h$,
$$h(y^k)\geq h(x^k)+\left\langle\nabla h(x^k), y^k-x^k \right\rangle,$$
and $\nabla\varphi(x)$ is Lipschitz with constant $L_1$, by Lemma \ref{lem2},
$$\varphi (y^k)\leq \varphi(x^k)+ \left\langle\nabla\varphi(x^k), y^k-x^k \right\rangle+\frac{L_1}{2}\|y^k-x^k\|^2.$$
Combining the above inequalities, one has
$$f(y^k)\leq f(x^k)-\frac{\lambda_k-L_1}{2}\|y^k-x^k\|^2.$$

{\bf (b)} We prove by contradiction. If for all $m\geq 1$, we have
\begin{align}
&f(y^k+\eta^md^k)>f(y^k), \;\; \forall m \nonumber\\
\Leftrightarrow\quad& f(y^k+\eta^md^k)-f(y^k)>0, \;\; \forall m \nonumber\\
\Rightarrow\quad& \left\langle\nabla f(y^k),\  \eta^md^k \right\rangle+o(\eta^m)>0, \;\; \forall m \nonumber\\
\Leftrightarrow\quad& \left\langle\nabla f(y^k),\  d^k \right\rangle + \frac{o(\eta^m)}{\eta^m}>0 \nonumber\\
\Rightarrow\quad& \left\langle\nabla f(y^k),\ d^k \right\rangle>0. \label{jia}
\end{align}
Beside that, from {\sc Step 1} in Algorithm \ref{al::main}, we have
$$\nabla g(y^k)-\nabla h(x^k)+\nabla\varphi(x^k)+\lambda_k(y^k-x^k)=0,$$
and
$$\nabla f(y^k)=\nabla\varphi(y^k)+\nabla g(y^k)-\nabla h(y^k),$$
then
$$\nabla f(y^k)=-\nabla h(y^k)+\nabla h(x^k)+\nabla\varphi (y^k)-\nabla\varphi(x^k)-\lambda_k(y^k-x^k).$$
Hence
\begin{align*}
\langle\nabla f(y^k), d^k\rangle &=-\langle\nabla h(y^k)-\nabla h(x^k), d^k\rangle+\langle\nabla\varphi(y^k)-\nabla\varphi(x^k), d^k\rangle-\lambda_k\|y^k-x^k\|^2\\
& \leq -(\lambda_k-L_1)\|y^k-x^k\|^2<0.
\end{align*}
This contradicts to \eqref{jia}. So the linesearch is well defined.

{\bf (c)} From {\sc Step 2} in Algorithm \ref{al::main}, one has,
\begin{align}
f(x^{k+1})&\leq f(y^k)-\alpha\eta_k\|d^k\|^2  \nonumber\\
&\leq f(x^k)-\frac{\lambda_k-L_1}{2}\|y^k-x^k\|^2-\alpha\eta_k\|d^k\|^2 \nonumber\\
&= f(x^k)-(\frac{\lambda_k-L_1}{2}+\alpha\eta_k)\|d^k\|^2. \label{ineq3}
\end{align}
Hence $\{f(x^k)\}$ is a strictly decreasing sequence, and combining with $\inf\limits_{x\in\Bbb R^n}f(x)>-\infty$, we get $\lim\limits_{k\rightarrow\infty}f(x^k)$ does exist.

{\bf (d)} Since $\{f(x^k)\}$ is convergent, then $$f(x^{k+1})-f(x^k)\rightarrow 0,$$
from \eqref{ineq3}, one has $$\|d^k\|^2=\|y^k-x^k\|^2\rightarrow 0.$$
Let $x^*$ be any accumulation point of $\{ x^k\}$ and let $x^{k_i}$ be a subsequence of $\{x^k\}$ converging to $x^*$. Since $\| y^{k_i}-x^{k_i}\|\rightarrow 0$, one has
$$\lim\limits_{i\rightarrow\infty}y^{k_i}= x^*.$$
The {\sc Step 1} in Algorithm \ref{al::main} yields
$$\nabla g(y^{k_i})-\left(\nabla h(x^{k_i})-\nabla\varphi (x^{k_i})\right)+\lambda_k\left( y^{k_i}-x^{k_i} \right)=0,$$
letting $i\rightarrow\infty$, we get from the above inequality that
$$\nabla g(x^*)-\nabla h(x^*) +\nabla\varphi(x^*)=0,$$
which means that $x^*$ is a stationary point of $f$.

{\bf (e)} Observe, from \eqref{ineq3}, one has
\begin{align}
\left(\frac{\lambda_k-L_1}{2}+\alpha\eta_k \right)\|d^k\|^2\leq f(x^k)-f(x^{k+1}).\label{ineq4}
\end{align}
Summing up \eqref{ineq4} from $0$ to $N$, we get
$$\sum\limits_{k=0}^{N}\left(\frac{\lambda_k-L_1}{2}+\alpha\eta_k \right)\|d^k\|^2\leq f(x^0)-f(x^{N+1})\leq f(x^0)-\inf\limits_{x\in\Bbb R^n}f(x),$$
since $\lambda_k\geq L_1+2\widehat{\lambda}$, then taking the limit as $N\rightarrow\infty$, the above inequality becomes
$$\sum\limits_{k=0}^{\infty}\widehat{\lambda}\|d^k\|^2\leq\sum\limits_{k=0}^{\infty}\left(\frac{\lambda_k-L_1}{2}+\alpha\eta_k \right)\|d^k\|^2\leq f(x^0)-\inf\limits_{x\in\Bbb R^n}f(x)<\infty.$$
Thus
$$\sum\limits_{k=0}^{\infty}\|d^k\|^2<\infty.$$
In addition, $$x^{k+1}-x^k=y^k+\eta_kd^k-x^k=(1+\eta_k)d^k,$$
so
$$\sum\limits_{k=0}^{\infty}\|x^{k+1}-x^k\|^2=\sum\limits_{k=0}^{\infty}(1+\eta_k)^2\|d^k\|^2\leq\sum\limits_{k=0}^{\infty}(1+\eta)^2\|d^k\|^2<\infty.$$
Therefore, the proof is complete.
\end{proof}

\begin{remark}{\rm
In a recent paper  \cite{AN}, An {\it et al.} proposed the following proximal point algorithm:
%
%
\begin{framed}
\begin{algorithm}\label{al::2} 
$\quad$\\
{\sf
{\bf Initialization}. Pick $x^0 \in \Bbb R^n$ and a tolerance $\epsilon > 0.$ Fixed  $ \lambda > L_1$.\\
{\bf Iteration $k$} ($k = 0, 1, 2, \ldots$).  Having $x^k$ do the following steps:

{\sc Step 1}. Solve the following strongly convex program
$$\min\limits_{x\in{\Bbb R^n}}\left\{g(x)- \left\langle\nabla h(x^k)-\nabla\varphi(x^k), x-x^k \right\rangle+\frac{\lambda}{2}\|x-x^k\|^2 \right\},$$ 
 to get the unique solution $x^{k+1}$.

{\sc Step 2}. If $\|x^{k+1} - x^k\| \leq \epsilon$, then stop. Otherwise, increase $k$ by $1$ and go to {\sc Step 1}.
}
\end{algorithm}
\end{framed}
The main difference of  Algorithm~\ref{al::main} and Algorithm~\ref{al::2} is at {\sc Step 2}. In Algorithm~\ref{al::main}, we use $d^k$ as the descent direction at $y^k$. In addition, $x^{k+1} = y^k + \eta_kd^k = x^k+ (1+\eta_k)d^k$ and
$$f(y^k)\leq f(x^k)-\frac{\lambda_k-L_1}{2}\| d^k\|^2,$$
and
$$f(x^{k+1}) \leq f(y^k)-\alpha\eta_k\| d^k\|^2 \leq f(x^k)-\left(\frac{\lambda_k-L_1}{2}+\alpha\eta_k \right)\| d^k\|^2.$$
Hence, we go a longer step at $x^k$ and make the value of function $f$ decrease much more than that in the proximal point algorithm. \qed
}\end{remark}

The following theorem establishes the convergence and the convergent rate of Algorithm~\ref{al::main}.
\begin{theorem}
Under assumptions of {\rm Theorem~\ref{theo1}} and we further assume that $\nabla g$ is locally Lipschitz continuous with constant $L_2$ and $f$ satisfies the {\L}ojasiewicz inequality with exponent $\kappa\in [0, 1)$. If the sequence $\{x^k\}$ has a limit point $x^*,$ then the whole sequence $\{ x^k\}$ converges to $x^*,$ which is a stationary point of $f$. Moreover denoting $f^*:=f(x^*),$ the following estimations hold$:$
\begin{itemize}
  \item[{\bf (a)}]  If $\kappa=0,$ then the sequences $\{x^k\}$ and $\{f(x^k)\}$ converge in a finite number of steps to $x^*$ and $f^*,$ respectively.
  \item[{\bf (b)}]  If $\kappa\in(0, \frac{1}{2}],$ then the sequences $\{x^k\}$ and $\{f(x^k)\}$ converge linearly to $x^*$ and $f^*,$ respectively.
  \item[{\bf (c)}]  If $\kappa\in(\frac{1}{2}, 1),$ then there exist some positive constants $A_1$ and $A_2$ such that for $k$ large enough$,$
$$\|x^k-x^*\|\leq A_1k^{-\frac{1}{2\kappa-1}},\;\hbox{ and }\; f(x^k)-f^*\leq A_2{k^{-\frac{1}{2\kappa-1}}}.$$
\end{itemize}
\end{theorem}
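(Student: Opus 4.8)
The plan is to run the standard Kurdyka--{\L}ojasiewicz machinery, in the differentiable form supplied by the {\L}ojasiewicz inequality of Definition~\ref{rem1}, on the value sequence $r_k:=f(x^k)-f^*$. Two ``engine'' inequalities will drive everything: a sufficient-decrease estimate and a gradient (relative-error) bound, both essentially already present in Theorem~\ref{theo1}. First the reductions: since $f$ is continuous and $x^{k_i}\to x^*$, one has $f(x^{k_i})\to f(x^*)$, and as $\{f(x^k)\}$ is decreasing and convergent by Theorem~\ref{theo1}(c), its limit equals $f(x^*)=:f^*$; moreover $x^*$ is stationary by Theorem~\ref{theo1}(d). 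Hence $r_k\downarrow 0$, and I may assume $r_k>0$ for all $k$ (otherwise the decrease estimate forces $d^k=0$ and the algorithm terminates in finitely many steps, which is case (a)). Because $x^{k_i}\to x^*$ and $\|d^k\|\to 0$, the iterates eventually enter any prescribed ball $\Bbb B(x^*,\rho)$; on such a ball the local Lipschitz continuity of $\nabla g,\nabla h$ and the global Lipschitz continuity of $\nabla\varphi$ make $\nabla f$ Lipschitz, with modulus denoted $L_f$.

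Second, the gradient bound. From the \emph{Step 1} optimality relation established in the proof of Theorem~\ref{theo1}(b), $\nabla f(y^k)=[\nabla\varphi(y^k)-\nabla\varphi(x^k)]-[\nabla h(y^k)-\nabla h(x^k)]-\lambda_k d^k$. Using the Lipschitz continuity of $\nabla\varphi$ (modulus $L_1$), the local Lipschitz continuity of $\nabla h$, and $\lambda_k\le\bar\lambda$, this gives $\|\nabla f(y^k)\|\le c_1\|d^k\|$ for some $c_1>0$. Since $\|x^k-y^k\|=\|d^k\|$ and $\nabla f$ is $L_f$-Lipschitz on $\Bbb B(x^*,\rho)$ (here the hypothesis that $\nabla g$ is locally Lipschitz enters), transferring from $y^k$ to $x^k$ yields $\|\nabla f(x^k)\|\le b\,\|d^k\|$ with $b:=c_1+L_f$, for all large $k$. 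Together with Theorem~\ref{theo1}(c), i.e.\ $r_k-r_{k+1}\ge\widehat\lambda\|d^k\|^2$, these are the two inequalities I need; I deliberately estimate $\nabla f$ at $x^k$ rather than at $y^k$ so that the indices on both sides align, which is what makes the rate step clean.

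Third, finite length and convergence of the whole sequence. Set $\phi(t)=\frac{M}{1-\kappa}t^{1-\kappa}$ for $\kappa\in(0,1)$, a concave $\mathcal C^1$ function with $\phi'(t)=Mt^{-\kappa}$. Concavity gives $\phi(r_k)-\phi(r_{k+1})\ge\phi'(r_k)(r_k-r_{k+1})$; the {\L}ojasiewicz inequality $r_k^{\kappa}\le M\|\nabla f(x^k)\|\le Mb\|d^k\|$ turns $\phi'(r_k)$ into the lower bound $\ge\frac{1}{b\|d^k\|}$, and the decrease estimate gives $r_k-r_{k+1}\ge\widehat\lambda\|d^k\|^2$; combining these yields $\|d^k\|\le\frac{b}{\widehat\lambda}\bigl(\phi(r_k)-\phi(r_{k+1})\bigr)$. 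Telescoping gives $\sum_k\|d^k\|<\infty$, and since $\|x^{k+1}-x^k\|=(1+\eta_k)\|d^k\|\le 2\|d^k\|$, also $\sum_k\|x^{k+1}-x^k\|<\infty$; thus $\{x^k\}$ is Cauchy and converges to its limit point $x^*$. \textbf{The main obstacle is the bootstrapping that legitimizes applying the {\L}ojasiewicz inequality at \emph{every} iterate}: a priori only a subsequence is known to approach $x^*$, so one must pick $k_0$ on that subsequence with $\|x^{k_0}-x^*\|$ and $\phi(r_{k_0})$ small, and then argue \emph{by induction} that the length estimate $\sum_{j=k_0}^{k}\|x^{j+1}-x^j\|\le\frac{2b}{\widehat\lambda}\phi(r_{k_0})$ keeps $x^{k+1}$ (and the intermediate $y^{k}$) inside $\Bbb B(x^*,\rho)$, so that the Lipschitz bound and the {\L}ojasiewicz inequality remain valid at the next step. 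Closing this induction, breaking the apparent circularity, is the delicate point.

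Finally, the rates. Eliminating $\|d^k\|$ between the two engine inequalities gives $r_k^{2\kappa}\le\nu\,(r_k-r_{k+1})$ with $\nu=\frac{(Mb)^2}{\widehat\lambda}$, which is precisely the hypothesis of Lemma~\ref{lem1} with $\mu=2\kappa$ and $t_k=r_k$. Its three cases yield: finite termination when $\kappa=0$ (case (a), also covered by the reduction above); linear decay of $r_k=f(x^k)-f^*$ when $2\kappa\le 1$, i.e.\ $\kappa\in(0,\tfrac12]$ (case (b)); and $r_k\le\gamma k^{-1/(2\kappa-1)}$ when $2\kappa>1$, i.e.\ $\kappa\in(\tfrac12,1)$ (case (c)). The matching estimates for $\{x^k\}$ then follow from the tail-length bound $\|x^k-x^*\|\le\sum_{j\ge k}\|x^{j+1}-x^j\|\le\frac{2b}{\widehat\lambda}\phi(r_k)=\frac{2bM}{\widehat\lambda(1-\kappa)}r_k^{1-\kappa}$, so the iterate error is controlled by a power of the function-value error and inherits, respectively, finite termination, linear convergence, and polynomial convergence of the stated order. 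The only work left is then routine, namely tracking the explicit constants $A_1,A_2$.
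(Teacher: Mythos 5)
Your overall architecture --- reduce to $r_k=f(x^k)-f^*>0$, establish a sufficient-decrease inequality and a gradient bound $\|\nabla f(x^k)\|\le b\|d^k\|$, run the concavity/{\L}ojasiewicz telescoping with a bootstrapping induction to obtain finite length, then feed both recursions into Lemma~\ref{lem1} --- is exactly the paper's proof. However, your derivation of the gradient bound has a genuine gap: it relies on hypotheses the theorem does not grant. You first estimate $\nabla f(y^k)=[\nabla\varphi(y^k)-\nabla\varphi(x^k)]-[\nabla h(y^k)-\nabla h(x^k)]-\lambda_k d^k$, invoking ``the local Lipschitz continuity of $\nabla h$'', and then transfer from $y^k$ to $x^k$ using an $L_f$-Lipschitz bound for $\nabla f=\nabla\varphi+\nabla g-\nabla h$ on a ball, which again requires $\nabla h$ to be locally Lipschitz. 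The theorem assumes only that $\nabla g$ is locally Lipschitz; $h$ is merely convex and differentiable, and a differentiable convex function need not have a locally Lipschitz gradient (consider $h(x)=\|x\|^{3/2}$ near the origin). So, as written, your two engine inequalities prove a weaker theorem carrying an extra assumption.

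The fix is the cancellation the paper exploits: evaluate $\nabla f$ at $x^k$ and substitute the Step 1 optimality condition $\nabla g(y^k)-\nabla h(x^k)+\nabla\varphi(x^k)+\lambda_k(y^k-x^k)=0$, whose $\nabla h$ and $\nabla\varphi$ terms are taken at the \emph{same} point $x^k$; they cancel exactly, leaving $\nabla f(x^k)=\nabla g(x^k)-\nabla g(y^k)-\lambda_k(y^k-x^k)$ and hence $\|\nabla f(x^k)\|\le (L_2+\lambda_k)\|d^k\|$, using only the local Lipschitz continuity of $\nabla g$. With that replacement your argument goes through; the bootstrapping induction you flag as ``the delicate point'' is carried out in the paper exactly as you sketch it. One further caveat: your one-step transfer $\|x^k-x^*\|\le\mathcal{R}_k\le\frac{2bM}{\widehat{\lambda}(1-\kappa)}r_k^{1-\kappa}$ combined with $r_k\le\gamma k^{-1/(2\kappa-1)}$ yields the iterate rate $k^{-(1-\kappa)/(2\kappa-1)}$ in case (c), not the stated $k^{-1/(2\kappa-1)}$; the paper's own proof, which instead runs Lemma~\ref{lem1} on $\mathcal{R}_i$ via $\mathcal{R}_i^{\kappa/(1-\kappa)}\le T_2(\mathcal{R}_i-\mathcal{R}_{i+1})$, produces the same weaker exponent, so this discrepancy lies between the theorem statement and both proofs, not between you and the paper.
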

\begin{proof}
By Theorem \ref{theo1} {\bf (c)}, we have $\lim\limits_{k\rightarrow\infty}f(x^k)=f^*$. If $x^*$ is a limit point of $\{x^k\}$, then there exists a subsequence $\{x^{k_i}\}$ of $\{ x^k\}$ which converges to $x^*$. By continuity of $f$, we have that
$$f(x^*)=\lim\limits_{i\rightarrow\infty}f(x^{k_i})=\lim\limits_{k\rightarrow\infty}f(x^k)=f^*.$$
Hence $f$ is finite and has the same value $f^*$ at every limit point of $\{x^k\}$. If $f(x^k)=f^*$ for some $k>1$, then $f(x^k)=f(x^{k+r}),\;\forall  r\geq 0$, since the sequence $\{f(x^k)\}$ is decreasing. Therefore, $x^k=x^{k+r}$ for all $r\geq 0$ and Algorithm \ref{al::main} terminates after a finite number of steps.

Now we assume that $f(x^k)>f^*$, $\forall k$. Since $f$ satisfies the {\L}ojasiewicz property, there exist $M>0$, $\epsilon_1>0$ and $\kappa\in[0, 1)$ such that
\begin{align}
|f(x)-f(x^*)|^{\kappa}\leq M\|\nabla f(x)\|,\;\; \forall \ x\in\Bbb B(x^*, \epsilon_1). \label{99}
\end{align}
Further, as $\nabla g$ is Locally Lipschitz around $x^*$, there exist some constants $L_2> 0$ and $\epsilon_2>0$ such that
\begin{align}
\|\nabla g(x)-\nabla g(y)\| \leq L_2\|x-y\|,\;\; \forall \ x, y \in\Bbb B(x^*, \epsilon_2). \label{98}
\end{align}
Let $\epsilon:=\frac{1}{2}\min\{\epsilon_1, \epsilon_2\},$ clearly $\epsilon > 0.$

Since $\lim\limits_{i\rightarrow\infty}x^{k_i}=x^*$ and $\lim\limits_{i\rightarrow\infty}f(x^{k_i})=f^*$, we can find an index $k_{\epsilon}$ large enough such that
\begin{align}
\|x^{k_{\epsilon}}-x^*\|+\frac{2M(L_2+\lambda_k)}{(1-\kappa)(\lambda_k-L_1)}(f(x^{k_{\epsilon}})-f^*)^{1-\kappa}<\epsilon. \label{97}
\end{align}
By Theorem \ref{theo1} {\bf (c)}, we know that $d^k=y^k-x^k\rightarrow 0$ as $k\rightarrow\infty$. Then without loss of generality, we can assume that
$$\|y^k-x^k\|\leq\epsilon,\;\;\forall k\geq {k_{\epsilon}}.$$

We now claim that, $\forall k\geq {k_{\epsilon}}$, whenever $x^k\in\Bbb B(x^*, \epsilon)$ the following holds
\begin{align}
\|x^{k+1}-x^k\|\leq\frac{M(L_2+\lambda_k)(1+\eta_k)}{(1-\kappa)\left(\frac{\lambda_k-L_1}{2}+\alpha\eta_k \right)}\left[(f(x^k)-f^*)^{1-\kappa}-\left(f(x^{k+1})-f^*\right)^{1-\kappa}\right]. \label{96}
\end{align}
Indeed, consider the concave function $\theta:(0, +\infty)\rightarrow(0,+\infty)$ defined by $\theta(t)=t^{1-\kappa}$. Then we have
$$\theta(t_1)-\theta(t_2)\geq \nabla \theta(t_1)^T(t_1-t_2),\;\;\forall t_1, t_2>0.$$
Substituting in this inequality $t_1$ by $\left(f(x^k)-f^*\right)$ and $t_2$ by $\left(f(x^{k+1})-f^*\right)$ and using \eqref{99} and then \eqref{ineq4}, we have
\begin{align}
&\ \left(f(x^k)-f^*\right)^{1-\kappa}- \left(f(x^{k+1})-f^* \right)^{1-\kappa}\nonumber\\ 
\geq&\ \frac{1-\kappa}{\left(f(x^k)-f^*\right)^{\kappa}} \left(f(x^k)-f(x^{k+1})\right)\nonumber\\
\geq&\ \frac{1-\kappa}{M\|\nabla f(x^k)\|} \left(\frac{\lambda_k-L_1}{2}+\alpha\eta_k \right) \|d^k\|^2\nonumber\\
=&\ \frac{1-\kappa}{M\|\nabla f(x^k)\|}   \frac{\left(\frac{\lambda_k-L_1}{2}+\alpha\eta_k \right)}{(1+\eta_k)^2} \|x^{k+1}-x^k\|^2. \label{95}
\end{align}
On the other hand, by Algorithm \ref{al::main}, we get
$$\nabla g(y^k)-\nabla h(x^k)+\nabla\varphi(x^k)+\lambda_k(y^k-x^k)=0$$
and
$$\|y^k-x^*\|\leq\|y^k-x^k\|+\|x^k-x^*\|\leq 2\epsilon\leq\epsilon_2.$$
Using \eqref{98}, we obtain
\begin{align}
\|\nabla f(x^k)\|\ =&\ \|\nabla\varphi(x^k)+\nabla g(x^k)-\nabla h(x^k)\|\nonumber\\
=&\ \|\nabla g(x^k)-\nabla g(y^k)-\lambda_k(y^k-x^k) \|\nonumber\\
\leq&\ \|\nabla g(x^k)-\nabla g(y^k)\|+\lambda_k\|y^k-x^k\|\nonumber\\
\leq&\ (L_2+\lambda_k) \|x^k-y^k\|\nonumber\\
=&\ \frac{L_2+\lambda_k}{1+\eta_k} \|x^{k+1}-x^k\|. \label{94}
\end{align}
From \eqref{95} and \eqref{94}, we get
$$\left(f(x^k)-f^*\right)^{1-\kappa}-\left(f(x^{k+1})-f^* \right)^{1-\kappa} \geq \frac{1-\kappa}{M \left(\frac{L_2+\lambda_k}{1+\eta_k}\right)\|x^{k+1}-x^k\|} \frac{\frac{\lambda_k-L_1}{2}+\alpha\eta_k}{(1+\eta_k)^2} \|x^{k+1}-x^k\|^2,$$
which yields \eqref{96}.

It then follows from \eqref{96} that
\begin{align}
\|x^{k+1}-x^k\|\leq&\ \frac{M(L_2+\lambda_k)(1+\eta)}{(1-\kappa)\left(\frac{\lambda_k-L_1}{2}\right)}\left[\left(f(x^k)-f^*\right)^{1-\kappa}-\left(f(x^{k+1})-f^*\right)^{1-\kappa}\right]\nonumber\\
=&\ \frac{2M(L_2+\lambda_k)(1+\eta)}{(1-\kappa)(\lambda_k-L_1)}\left[\left(f(x^k)-f^*\right)^{1-\kappa}-\left(f(x^{k+1})-f^*\right)^{1-\kappa}\right] \label{93}
\end{align}
for all $k\geq {k_{\epsilon}}$ such that $x^k\in\Bbb B(x^*, \epsilon)$.  

We next prove that $x^k\in \Bbb B(x^*, \epsilon)$, $\forall k\geq {k_{\epsilon}}$ by induction. Indeed, from \eqref{97} the claim holds for $k={k_{\epsilon}}$.  
We suppose that it also holds for $k={k_{\epsilon}}, {k_{\epsilon}}+1, \ldots, {k_{\epsilon}}+r-1$, with $r\geq 1$. Then \eqref{93} is valid for $k={k_{\epsilon}}, {k_{\epsilon}}+1, \ldots, {k_{\epsilon}}+r-1$. 
Therefore
\begin{align*}
 & \ \|x^{{k_{\epsilon}}+r}-x^*\| \\
=&\ \|x^{{k_{\epsilon}}+1}-x^{k_{\epsilon}}+x^{{k_{\epsilon}}+2}-x^{{k_{\epsilon}}+1}+\cdots+x^{{k_{\epsilon}}+r}-x^{{k_{\epsilon}}+r-1}+x^{k_{\epsilon}}-x^*\|\\
\leq&\ \|x^{k_{\epsilon}}-x^*\|+\sum\limits_{i=1}^{r}\|x^{{k_{\epsilon}}+i}-x^{{k_{\epsilon}}+i-1}\|\\
\leq&\ \|x^{k_{\epsilon}}-x^*\|+\frac{2M(L_2+\lambda_k)(1+\eta)}{(1-\kappa)(\lambda_k-L_1)} \sum\limits_{i=1}^{r}\left[\left(f(x^{{k_{\epsilon}}+i-1})-f^*\right)^{1-\kappa}- \left(f(x^{{k_{\epsilon}}+i})-f^*\right)^{1-\kappa}\right]\\
\leq&\ \|x^{k_{\epsilon}}-x^*\|+\frac{2M(L_2+\lambda_k)}{(1-\kappa)(\lambda_k-L_1)}\left(f(x^{k_{\epsilon}})-f^*\right)^{1-\kappa} <\epsilon.
\end{align*}
Now adding \eqref{93} from $k={k_{\epsilon}}$ to ${k_{\epsilon}}+r-1$, one has
\begin{align}
\sum\limits_{k={k_{\epsilon}}}^{{k_{\epsilon}}+r-1}\|x^{k+1}-x^k\|\leq\frac{2M(L_2+\lambda_k)(1+\eta)}{(1-\kappa)(\lambda_k-L_1)}\left(f(x^{k_{\epsilon}})-f^*\right)^{1-\kappa}. \label{92}
\end{align}
Taking the limit as $r\rightarrow\infty$, we can conclude from \eqref{92} that
\begin{align}
\sum\limits_{k=1}^{\infty}\|x^{k+1}-x^k\|<\infty, \label{91}
\end{align}
which means that $\{x^k\}$ is a Cauchy sequence. Therefore, combining with $x^*$ is a limit point of $\{x^k\}$, we conclude that the whole sequence $\{x^k\}$ converges to $x^*$. By Theorem \ref{theo1} {\bf (d)}, $x^*$ must be a stationary point of $f$.

For $k\geq N$, it follows from \eqref{99}, \eqref{98} and then \eqref{ineq4}, that
\begin{align}
\left(f(x^k)-f^*\right)^{2\kappa}\ \leq&\ M^2\|\nabla f(x^k)\|^2\nonumber\\
\leq&\ M^2\|\nabla\varphi(x^k)+\nabla g(x^k)-\nabla h(x^k)\|^2\nonumber\\
=&\ M^2\|\nabla g(x^k)-\nabla g(y^k)-\lambda_k( y^k-x^k) \|^2\nonumber\\
\leq&\ M^2(L_2+\lambda_k)^2\|x^k-y^k\|^2\nonumber\\
\leq&\ \frac{M^2(L_2+\lambda_k)^2}{\frac{\lambda_k-L_1}{2}+\alpha\eta_k}  \left(f(x^k)-f(x^{k+1})\right)\nonumber\\
\leq&\ C \left[\left(f(x^k)-f^*\right)-\left(f(x^{k+1})-f^*\right)\right], \label{90}
\end{align}
where $C=\frac{M^2(L_2+\bar{\lambda})^2}{\hat{\lambda}+\alpha\eta}$.
By applying Lemma \ref{lem1} with $t_k=f(x^k)-f^*$, $\mu=2\kappa$ and $\nu=C$, {\bf (a)}$\sim${\bf (c)} regarding the sequence $\{f(x^k)\}$ follow from \eqref{90}.

By \eqref{91}, we know that $\mathcal{R}_i=\sum\limits_{k=i}^{\infty}\|x^{k+1}-x^k\|$ is finite. Note that $\|x^i-x^*\|\leq \mathcal{R}_i$ by the triangle inequality. Therefore, the rate of convergence of $x^i$ to $x^*$ can be deduced from the convergence rate of $\mathcal{R}_i$ to $0$. Adding \eqref{93} from $i$ to $r$ with ${k_{\epsilon}}\leq i\leq r$, we have
\begin{align*}
\mathcal{R}_i=&\ \lim\limits_{r\rightarrow\infty}\sum\limits_{k=i}^r\|x^{k+1}-x^k\| \\
\leq&\ \frac{2M(L_2+\lambda_i)(1+\eta)}{(1-\kappa)\hat{\lambda}} \left(f(x^i)-f^*\right)^{1-\kappa}\\
\leq& T_1\left(f(x^i)-f^*\right)^{1-\kappa},
\end{align*}
where $T_1:=\frac{2M(L_2+\bar\lambda)(1+\eta)}{(1-\kappa)\hat{\lambda}}>0$. Together with \eqref{99} and \eqref{94}, we have
\begin{align*}
\mathcal{R}_i^{\frac{\kappa}{1-\kappa}}\ \leq&\ T_1^{\frac{\kappa}{1-\kappa}}|f(x^i)-f^*|^{\kappa}\\
\leq&\ MT_1^{\frac{\kappa}{1-\kappa}}\|\nabla f(x^i)\|\\
\leq&\ MT_1^{\frac{\kappa}{1-\kappa}}\left(\frac{L_2+\lambda_i}{1+\eta_i}\right)\|x^{i+1}-x^i\|\\
\leq&\ MT_1^{\frac{\kappa}{1-\kappa}}(L_2+\lambda_i)\|x^{i+1}-x^i\|\\
\leq&\ M\left(L_2+\bar{\lambda}\right)T_1^{\frac{\kappa}{1-\kappa}}(\mathcal{R}_i-\mathcal{R}_{i+1})
\end{align*}
Hence, by setting $T_2:= M(L_2+\bar{\lambda})T_1^{\frac{\kappa}{1-\kappa}}$ that is clearly positive, the above inequality becomes
$$\mathcal{R}_i^{\frac{\kappa}{1-\kappa}}\leq T_2(\mathcal{R}_i-\mathcal{R}_{i+1}).$$
Now let $\mu =\frac{\kappa}{1-\kappa}$, $\nu= T_2$, and applying Lemma \ref{lem1}, we conclude that the statements in {\bf (a)}$\sim${\bf (c)} regarding the sequence $\{x^k\}$ hold.
\end{proof}

\section{Convergence of the Inertial Proximal Algorithm}\label{Sec:4}

In this section, we consider the problem \eqref{problem}, where $g$ and $h$ are not necessary differentiable. In \cite{MM}, P.E. Maing\'{e} and A. Moudafi introduced the following inertial proximal algorithm for solving \eqref{problem}.
{
%
%
%

\begin{framed}
\begin{algorithm}\label{al::3} 
$\quad$\\
{\sf
{\bf Initialization}. $x^0, y^0\in\Bbb R^n$; $\lambda, \mu>0$; $\alpha+\beta>0$; $\beta>0$; $\gamma>0$ and $\tau>-\frac{2+\alpha}{2\beta}$.\\
{\bf Iteration $k$} ($k = 0, 1, 2, \ldots$).  Having $x^k$, $y^k$ do the following steps:

{\sc Step 1}. Compute $q^k\in\partial h(x^k)$ and $\nabla\varphi (x^k).$

{\sc Step 2}. $x^{k+1}=(I+\lambda\partial g)^{-1}\left(x^k-\lambda \left(\nabla\varphi(x^k)-q^k \right)-\mu\left(\alpha x^k+\beta y^k\right)\right).$

{\sc Step 3}. Compute $y^{k+1}=y^k-\frac{1}{\rho}\left[\alpha x^k+\beta y^k+\gamma\alpha(x^{k+1}-x^k)\right]$, where $\rho=1+\tau\beta+\frac{\alpha+\beta}{2}$, and go to {\bf Iteration $k$} with  $k$ being replaced by $k+1$.
}
\end{algorithm}
\end{framed}

Set
$a=\frac{2\alpha}{2+\alpha+\beta},$  $b=\frac{2\beta}{2+\alpha+\beta}$ and for $\delta>0$,
consider the discrete energy $E_{k}(\delta)$ of Algorithm~\ref{al::2} defined by
$$E_{k}(\delta)=\delta f(x^k)+\frac{1}{2}\|ax^k+by^k\|^2.$$

The following theorem was proved in \cite{MM}.
\begin{theorem}\label{theo3}
\cite[Theorem 3.2]{MM} Assume that $\beta>0,$ $\alpha+\beta>0;$ $\tau>-\frac{2+\alpha}{2\beta};$
$\gamma\geq\frac{1}{2}$ and $g,$ $h$ are proper lower semicontinuous convex functions on $\Bbb R^n,$ $\varphi$ is a differentiable function on $\Bbb R^n$ with $L_1$-Lipschitz continuous gradient$,$ for some $L_1\in (0, +\infty)$ and $\lambda, \mu$ verify
$$\lambda L_1+\mu(\gamma\alpha+\rho)\leq 1.$$
Then $\{ x^k\}$ and $\{ y^k\}$ generated by Algorithm \ref{al::3} satisfy the following properties:
\begin{itemize}
 \item[{\bf (a)}]  For $\delta\in [\frac{\lambda}{\rho\mu}(\sqrt{b_2}-\sqrt{a_2+b_2})^2, \frac{\lambda}{\rho\mu}(\sqrt{b_2}+\sqrt{a_2+b_2})^2],$ where $a_2=a\left[1+2b(\gamma-\frac{1}{2})\right],$ $b_2=b\left[1+b(\tau-\frac{1}{2})\right],$ the energy $\{E_k(\delta)\}$ is a decreasing and converging sequence.
  \item[{\bf (b)}]  $\lim\limits_{k\rightarrow\infty}f(x^k)$ exists.
  \item[{\bf (c)}]  $\lim\limits_{k\rightarrow\infty}\|x^{k+1}-x^k\|=\lim\limits_{k\rightarrow\infty}\|y^{k+1}-y^k\|=0.$
  \item[{\bf (d)}]  $\lim\limits_{k\rightarrow\infty}\|\alpha x^{k}+\beta y^k\|=0.$
  \item[{\bf (e)}]  If $\{ x^k\}$ and $\{ q^k\}$ are bounded$,$ then every cluster point $x^*$ of the sequence $\{ x^k\}$ is a critical point of the function $f$.
\end{itemize}
\end{theorem}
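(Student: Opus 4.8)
The proof rests on a Lyapunov (discrete energy) analysis built around the function $E_k(\delta)$. The plan is to first translate the resolvent step \emph{Step 2} into a subgradient inclusion, then to produce a one-step decrease estimate for $f$, and finally to combine it with the expansion of the quadratic part $\frac12\|ax^k+by^k\|^2$ so as to exhibit $E_k(\delta)$ as a decreasing sequence.

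First I would rewrite \emph{Step 2}. Since $x^{k+1}=(I+\lambda\partial g)^{-1}(\cdots)$, the definition of the resolvent gives
$$\xi^{k+1}:=\frac{x^k-x^{k+1}}{\lambda}-\big(\nabla\varphi(x^k)-q^k\big)-\frac{\mu}{\lambda}(\alpha x^k+\beta y^k)\ \in\ \partial g(x^{k+1}).$$
Using convexity of $g$ (the subgradient inequality at $x^{k+1}$ tested against $x^k$), convexity of $h$ (so that $-h(x^{k+1})\le -h(x^k)-\langle q^k,x^{k+1}-x^k\rangle$ from $q^k\in\partial h(x^k)$), and the descent Lemma \ref{lem2} applied to $\varphi$, then adding the three inequalities, the terms containing $\nabla\varphi(x^k)$ and $q^k$ cancel. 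Writing $s=\frac{2+\alpha+\beta}{2}$ and noting that $\alpha x^k+\beta y^k=s(ax^k+by^k)$, this yields a clean estimate of the form
$$f(x^{k+1})-f(x^k)\ \le\ \Big(\frac{L_1}{2}-\frac1\lambda\Big)\|x^{k+1}-x^k\|^2-\frac{\mu s}{\lambda}\big\langle ax^k+by^k,\ x^{k+1}-x^k\big\rangle.$$

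Next I would expand the quadratic term. Setting $z^k=ax^k+by^k$ and substituting the $y$-update from \emph{Step 3} into $z^{k+1}-z^k=a(x^{k+1}-x^k)+b(y^{k+1}-y^k)$ gives $z^{k+1}-z^k=\big(a-\tfrac{b\gamma\alpha}{\rho}\big)(x^{k+1}-x^k)-\tfrac{bs}{\rho}z^k$. Expanding $\|z^{k+1}\|^2-\|z^k\|^2=\|z^{k+1}-z^k\|^2+2\langle z^{k+1}-z^k,z^k\rangle$ and forming $E_{k+1}(\delta)-E_k(\delta)$ (that is, multiplying the $f$-decrease by $\delta$ and adding it to $\tfrac12$ times the quadratic expansion), the indefinite cross term $\langle z^k,x^{k+1}-x^k\rangle$ coming from the $f$-estimate is combined with the one produced by the quadratic part, and Cauchy--Schwarz leaves a quadratic form in the two scalars $\|x^{k+1}-x^k\|$ and $\|z^k\|$. \textbf{This is the crux of the argument}: one must show that, under the standing hypotheses $\gamma\ge\frac12$, $\tau>-\frac{2+\alpha}{2\beta}$ and $\lambda L_1+\mu(\gamma\alpha+\rho)\le1$, this $2\times2$ quadratic form is negative semidefinite exactly for $\delta$ lying in the stated interval; its coefficients are affine in $\delta$, so the negative-semidefiniteness (discriminant) condition is a quadratic inequality in $\delta$ whose roots are $\frac{\lambda}{\rho\mu}(\sqrt{b_2}\mp\sqrt{a_2+b_2})^2$, with $a_2,b_2$ collecting the relevant parameter combinations. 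I expect the bookkeeping that produces $a_2=a[1+2b(\gamma-\tfrac12)]$ and $b_2=b[1+b(\tau-\tfrac12)]$ to be the main obstacle, since it requires carefully tracking how $a,b,\rho$ and the condition $\lambda L_1+\mu(\gamma\alpha+\rho)\le1$ interact.

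Once the descent $E_{k+1}(\delta)\le E_k(\delta)-c_1\|x^{k+1}-x^k\|^2-c_2\|z^k\|^2$ is established with $c_1,c_2\ge0$, the remaining conclusions are routine. Since $f$ is bounded below and $\delta>0$ on the interval, $E_k(\delta)$ is bounded below; being decreasing it converges, giving \textbf{(a)}. Summing the descent inequality over $k$ yields $\sum_k\|x^{k+1}-x^k\|^2<\infty$ and $\sum_k\|z^k\|^2<\infty$, hence $\|x^{k+1}-x^k\|\to0$ and $\|\alpha x^k+\beta y^k\|=s\|z^k\|\to0$, which is \textbf{(d)}; feeding these into the identity $y^{k+1}-y^k=-\frac1\rho[sz^k+\gamma\alpha(x^{k+1}-x^k)]$ gives $\|y^{k+1}-y^k\|\to0$, completing \textbf{(c)}. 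For \textbf{(b)}, from $f(x^k)=\frac1\delta\big(E_k(\delta)-\frac12\|z^k\|^2\big)$ together with $\|z^k\|\to0$ we conclude that $\lim_k f(x^k)$ exists. Finally, for \textbf{(e)}, take a cluster point $x^*=\lim_j x^{k_j}$; boundedness of $\{q^k\}$ lets us pass to a further subsequence with $q^{k_j}\to q^*$, and closedness of the graph of $\partial h$ gives $q^*\in\partial h(x^*)$. Passing to the limit in the inclusion $\xi^{k_j+1}\in\partial g(x^{k_j+1})$, using $x^{k_j+1}\to x^*$ (as $\|x^{k+1}-x^k\|\to0$), $z^{k_j}\to0$, and continuity of $\nabla\varphi$, the closedness of the graph of $\partial g$ yields $q^*-\nabla\varphi(x^*)\in\partial g(x^*)$, i.e. $q^*\in\big(\nabla\varphi(x^*)+\partial g(x^*)\big)\cap\partial h(x^*)$, so $x^*$ is a critical point of $f$.
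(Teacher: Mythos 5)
First, a point of comparison: the paper does not prove this statement at all. It is quoted verbatim from Maing\'{e}--Moudafi \cite{MM} (their Theorem 3.2), and the only piece of that proof the paper ever touches is inequality (3.14) of \cite{MM}, imported later in the proof of Theorem 4.2, namely
\begin{equation*}
E_{k+1}(\delta)-E_k(\delta)+\delta\tfrac{\rho\mu}{\lambda}\|x^{k+1}-x^k\|^2+b_2\|y^{k+1}-y^k\|^2+\big(a_2-\delta\tfrac{\rho\mu}{\lambda}\big)\langle x^{k+1}-x^k,\,y^{k+1}-y^k\rangle\leq 0.
\end{equation*}
Your overall strategy is indeed the Lyapunov argument behind that inequality: your resolvent inclusion $\xi^{k+1}\in\partial g(x^{k+1})$ is correct, the three convexity/descent inequalities and the cancellation of $\nabla\varphi(x^k)$ and $q^k$ are correct, and the identity $z^{k+1}-z^k=\big(a-\tfrac{b\gamma\alpha}{\rho}\big)(x^{k+1}-x^k)-\tfrac{bs}{\rho}z^k$ is correct. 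Parts (b), (d), (e) are also handled soundly \emph{given} the descent estimate you postulate.

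The genuine gap is exactly where you write that you ``expect the bookkeeping to be the main obstacle'': the negative-semidefiniteness computation producing the interval $\big[\tfrac{\lambda}{\rho\mu}(\sqrt{b_2}-\sqrt{a_2+b_2})^2,\tfrac{\lambda}{\rho\mu}(\sqrt{b_2}+\sqrt{a_2+b_2})^2\big]$ together with the specific constants $a_2=a[1+2b(\gamma-\tfrac12)]$, $b_2=b[1+b(\tau-\tfrac12)]$ \emph{is} the content of part (a); deferring it means part (a) is asserted, not proved, and everything downstream rests on it. Moreover, your quadratic form is in the variables $(\|x^{k+1}-x^k\|,\|z^k\|)$ while (3.14) is in $(\|x^{k+1}-x^k\|,\|y^{k+1}-y^k\|)$; these are related by the invertible substitution $z^k=-\tfrac{\rho}{s}(y^{k+1}-y^k)-\tfrac{\gamma\alpha}{s}(x^{k+1}-x^k)$, so your route can in principle succeed, but your coefficients differ from $\big(\delta\tfrac{\rho\mu}{\lambda},\,b_2,\,a_2-\delta\tfrac{\rho\mu}{\lambda}\big)$ and you never verify that the discriminant condition yields the \emph{same} $\delta$-interval, nor where the hypothesis $\lambda L_1+\mu(\gamma\alpha+\rho)\leq 1$ enters to clean up the coefficient of $\|x^{k+1}-x^k\|^2$. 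A second, smaller gap: from a descent inequality with coefficients $c_1,c_2\geq 0$, summation yields nothing; to deduce (c) and (d) you need $c_1,c_2>0$, which requires taking $\delta$ in the \emph{interior} of the interval (for instance $\delta=(a_2+b_2)\tfrac{\lambda}{\rho\mu}$, the choice the paper uses as $\delta_1$ in Theorem 4.2, giving coefficients $\tfrac12(2a_2+b_2)$ and $\tfrac{b_2}{2}$) and checking the interval is nondegenerate --- this is precisely where the hypotheses $\gamma\geq\tfrac12$ and $\tau>-\tfrac{2+\alpha}{2\beta}$ (which force $b_2>0$) do real work, and your sketch never uses them.
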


When $h$ is differentiable, Algorithm~\ref{al::3} becomes the following algorithm.
%
%
%

\begin{framed}
\begin{algorithm}\label{al::4} 
$\quad$\\
{\sf
{\bf Initialization}. $x^0, y^0\in\Bbb R^n$; $\lambda, \mu>0$; $\alpha+\beta>0$; $\beta>0$; $\gamma>0$ and $\tau>-\frac{2+\alpha}{2\beta}$.\\
{\bf Iteration $k$} ($k = 0, 1, 2, \ldots$).  Having $x^k$, $y^k$ do the following steps:

{\sc Step 1}. Compute $\nabla\varphi (x^k)$ and $\nabla h(x^k)$.

{\sc Step 2}. Solve the following strongly convex program
\begin{align*}
\min\limits_{x\in\Bbb R^n}\left\{g(x)-\left\langle\nabla h(x^k)-\nabla\varphi(x^k), x-x^k \right\rangle+\frac{\mu}{\lambda}\left\langle\alpha x^k+\beta y^k, x-x^k\right\rangle+\frac{1}{\lambda}\|x-x^k\|^2 \right\}
\end{align*}
to get the unique solution $x^{k+1}$.

{\sc Step 3}. $y^{k+1}=y^k-\frac{1}{\rho}\left[\alpha x^k+\beta y^k+\gamma\alpha(x^{k+1}-x^k)\right]$, where $\rho=1+\tau\beta+\frac{\alpha+\beta}{2},$ and go to {\bf Iteration $k$} with  $k$ being replaced by $k+1$.
}
\end{algorithm}
\end{framed}

Set
$\delta_1=(a_2+b_2)\beta$ and $z=(x,y)\in\Bbb R^n\times\Bbb R^n$, define the function $\phi:\Bbb R^n\times\Bbb R^n\rightarrow\Bbb R$ as follows
$$\phi(z)=\phi(x, y)=\delta_1 f(x)+\frac{1}{2}\|ax+by\|^2.$$

The following theorem establishes the convergence of $\{ x^k\}$.
\begin{theorem}
Under assumptions of {\rm Theorem~\ref{theo3},} we further assume that $\nabla h$ is Lipschitz continuous with constant $L_3$ and $\phi(z)$ is a K{\L} function$,$ then
\begin{itemize}
  \item[{\bf (a)}]  $\displaystyle{\phi(z^{k+1})\leq\phi(z^k)-\frac{1}{2}(2a_2+b_2)\|x^{k+1}-x^k\|^2-\frac{b_2}{2}\|y^{k+1}-y^k\|^2}$
$$-\frac{b_2}{2}\|(x^{k+1}-x^k)-(y^{k+1}-y^k)\|^2,$$
where $z^k=(x^k, y^k),$ $\forall k$.
  \item[{\bf (b)}]  If $\{ x^k\}$ has a limit point $x^*,$ then the whole sequence $\{ x^k\}$ converges to $x^*$.
  \item[{\bf (c)}] If in addition the function $\theta$ in the K{\L} inequality has the form $\theta (t) = M t^{1-\kappa},$ then we have the following
      \begin{itemize}
        \item[(i)] if $\kappa  = 0,$ then the algorithm terminate in a finite steps$;$
        \item[(ii)] if $0 < \kappa \leq \frac{1}{2},$ then there exist $A_1 > 0$ and $\zeta \in (0, 1)$ such that
                                   $$\| x^k - x^* \| \leq A_1\zeta^k;$$
        \item[(iii)] if $\frac{1}{2} < \kappa < 1,$ then there exists $A_2 > 0$ such that
                                     $$\| x^k - x^* \| \leq A_2 k^{\frac{1-\kappa}{1-2\kappa}}.$$
      \end{itemize}
\end{itemize}
\end{theorem}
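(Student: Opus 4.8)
The plan is to fit Algorithm 3 into the abstract Kurdyka--{\L}ojasiewicz convergence scheme of \cite{BAA}, applied to the augmented iterate $z^k=(x^k,y^k)$ and the function $\phi$. That scheme needs three ingredients: a sufficient decrease estimate, a relative error (subgradient) bound, and the K-{\L} property of $\phi$ at a limit point. Part {\bf (a)} is precisely the sufficient decrease, the relative error bound will be read off from the optimality conditions of the algorithm, and the K-{\L} property is assumed in the hypotheses. Since $\varphi$ and $h$ are continuously differentiable, Proposition \ref{pro1} gives the convenient splitting $\partial^M\phi(x,y)=\big(\delta_1(\nabla\varphi(x)+\partial g(x)-\nabla h(x))+a(ax+by),\ b(ax+by)\big)$, which I will use throughout.

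To prove {\bf (a)} I would start from the optimality condition of the strongly convex subproblem in {\it Step 2}: there exists $\xi^{k+1}\in\partial g(x^{k+1})$ with $\xi^{k+1}=\nabla h(x^k)-\nabla\varphi(x^k)-\tfrac{\mu}{\lambda}(\alpha x^k+\beta y^k)-\tfrac{1}{\lambda}(x^{k+1}-x^k)$. Combining the subgradient inequality for $g$ at $x^{k+1}$, the descent lemma (Lemma \ref{lem2}) applied to the smooth parts $\varphi$ and $h$ with constants $L_1$ and $L_3$, and the $y$-update in {\it Step 3}, one estimates the increment of $\delta_1 f$ together with the inertial term $\tfrac12\|ax+by\|^2$. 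This is the same energy bookkeeping that underlies Theorem \ref{theo3}{\bf (a)} (Theorem 3.2 of \cite{MM}), except that keeping the quadratic remainders yields the three negative terms with the stated coefficients $\tfrac12(2a_2+b_2)$, $\tfrac{b_2}{2}$ and $\tfrac{b_2}{2}$. I expect this computation, and in particular matching the constants to the prescribed $a_2$ and $b_2$, to be the main obstacle.

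Granting {\bf (a)}, it gives $\phi(z^{k+1})\le\phi(z^k)-c\|z^{k+1}-z^k\|^2$ with $c=\tfrac12\min\{2a_2+b_2,\,b_2\}>0$. For the relative error bound I would display $w^{k+1}\in\partial^M\phi(z^{k+1})$ assembled from $\zeta^{k+1}:=\nabla\varphi(x^{k+1})+\xi^{k+1}-\nabla h(x^{k+1})\in\partial^M f(x^{k+1})$. Using the formula for $\xi^{k+1}$, the Lipschitz continuity of $\nabla\varphi$ and $\nabla h$, and the identity $ax+by=\tfrac{2}{2+\alpha+\beta}(\alpha x+\beta y)$ together with {\it Step 3} (which rewrites both $\alpha x^k+\beta y^k$ and $\alpha x^{k+1}+\beta y^{k+1}$ as linear combinations of $x^{k+1}-x^k$ and $y^{k+1}-y^k$), one obtains $\|w^{k+1}\|\le C(\|x^{k+1}-x^k\|+\|y^{k+1}-y^k\|)$ for some $C>0$. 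With the decrease estimate, this relative error bound, the continuity of $\phi$ along a subsequence converging to a limit point $(x^*,y^*)$, and the K-{\L} property of $\phi$, the abstract argument of \cite{BAA} yields $\sum_k\|z^{k+1}-z^k\|<\infty$; hence $\{z^k\}$ is Cauchy and $\{x^k\}$ converges to $x^*$, which proves {\bf (b)}.

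For {\bf (c)}, with $\theta(t)=Mt^{1-\kappa}$ the K-{\L} inequality reads $M(1-\kappa)(\phi(z^k)-\phi^*)^{-\kappa}\,d(0;\partial^M\phi(z^k))\ge 1$. Substituting the relative error bound $d(0;\partial^M\phi(z^k))\le\|w^k\|\le C\|z^k-z^{k-1}\|$ and the decrease $\phi(z^k)-\phi(z^{k+1})\ge c\|z^{k+1}-z^k\|^2$, and telescoping through the concavity of $t\mapsto t^{1-\kappa}$ exactly as in the proof of Theorem \ref{theo2}, I would reduce the tail sum $\mathcal{R}_k:=\sum_{j\ge k}\|z^{j+1}-z^j\|$ to a recursion of the form $\mathcal{R}_k^{\kappa/(1-\kappa)}\le\nu(\mathcal{R}_k-\mathcal{R}_{k+1})$. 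Applying Lemma \ref{lem1} with $\mu=\tfrac{\kappa}{1-\kappa}$, and using $\|x^k-x^*\|\le\mathcal{R}_k$, then gives finite termination for $\kappa=0$, the geometric rate $A_1\zeta^k$ for $\kappa\in(0,\tfrac12]$, and, since $-\tfrac{1}{\mu-1}=\tfrac{1-\kappa}{1-2\kappa}$, the polynomial rate $A_2k^{(1-\kappa)/(1-2\kappa)}$ for $\kappa\in(\tfrac12,1)$.
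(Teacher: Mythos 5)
Your plan for parts \textbf{(b)} and \textbf{(c)} essentially coincides with the paper's proof, but part \textbf{(a)} is a genuine gap: you defer it to an unexecuted re-derivation of the Maing\'{e}--Moudafi energy estimate and yourself flag it as ``the main obstacle,'' so the sufficient-decrease inequality on which everything else rests is never established. The idea you are missing is that no new bookkeeping is needed. Since the assumptions of Theorem \ref{theo3} are in force, the intermediate inequality (3.14) from the proof of Theorem 3.2 in \cite{MM} is available, namely
\begin{align*}
E_{k+1}(\delta)-E_k(\delta)+\delta\tfrac{\rho\mu}{\lambda}\|x^{k+1}-x^k\|^2
+b_2\|y^{k+1}-y^k\|^2\qquad&\\
+\big(a_2-\delta\tfrac{\rho\mu}{\lambda}\big)\langle x^{k+1}-x^k,\,y^{k+1}-y^k\rangle&\le 0;
\end{align*}
substituting $\langle u,v\rangle=\tfrac12\|u\|^2+\tfrac12\|v\|^2-\tfrac12\|u-v\|^2$ and choosing $\delta$ so that $\delta\tfrac{\rho\mu}{\lambda}=a_2+b_2$ (the paper's $\delta_1$, for which $E_k(\delta_1)=\phi(z^k)$) yields \textbf{(a)} in three lines, using $a_2+\tfrac{b_2}{2}=\tfrac12(2a_2+b_2)$. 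Moreover, your from-scratch sketch contains a concrete misstep: for the concave part $-h$ one needs the convexity inequality $h(x^{k+1})\ge h(x^k)+\langle\nabla h(x^k),x^{k+1}-x^k\rangle$, whereas the descent lemma (Lemma \ref{lem2}) with constant $L_3$ bounds $h(x^{k+1})$ from \emph{above}, which is the wrong direction here. In fact \textbf{(a)} requires no Lipschitz assumption on $\nabla h$ at all; $L_3$ enters only in the subgradient bound used for \textbf{(b)}.

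For \textbf{(b)} and \textbf{(c)} you are on the paper's track: the paper builds $\omega^k\in\partial^M\phi(z^k)$ from the optimality condition of \textit{Step 2} exactly as you build $w^{k+1}$ (shifted by one index; incidentally your splitting of $\partial^M\phi$ with the outer factors $a$ and $b$ is the correct one), obtains $d(0;\partial^M\phi(z^k))\le C\|z^k-z^{k-1}\|$, and then carries out the K-{\L} induction and Cauchy argument explicitly rather than invoking an abstract theorem (note that the discrete sufficient-decrease/relative-error scheme you want to quote is really in \cite{AHBJ} and \cite{ABB}, not \cite{BAA}). Two points you should make explicit: first, the hypothesis of \textbf{(b)} gives a limit point only of $\{x^k\}$; to get a limit point of $z^k=(x^k,y^k)$ you must use Theorem \ref{theo3}\textbf{(d)} ($\|\alpha x^k+\beta y^k\|\to 0$ with $\beta>0$) to conclude $y^{k_i}\to y^*=-\tfrac{\alpha}{\beta}x^*$, as the paper does. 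Second, in \textbf{(c)} the paper arrives at the two-term recursion $\mathcal{R}_k\le T_1(\mathcal{R}_{k-1}-\mathcal{R}_k)+T_2(\mathcal{R}_{k-1}-\mathcal{R}_k)^{\frac{1-\kappa}{\kappa}}$ and cites Theorem 2 of \cite{AHBJ}; your plan to absorb this into a single recursion and apply Lemma \ref{lem1} (with the exponent identity $-\tfrac{1}{\mu-1}=\tfrac{1-\kappa}{1-2\kappa}$) is an equivalent and slightly more self-contained finish, though it needs a little care with the backward-versus-forward difference indexing.
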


\begin{proof}
{\bf (a)} From the inequality (3.14) in \cite{MM}, we have
\begin{align}
E_{k+1}(\delta)-E_k(\delta)+\delta \frac{\rho\mu}{\lambda}\| x^{k+1}-x^k\|^2+b_2\| y^{k+1}-y^k\|^2& \nonumber\\
+\left(a_2-\delta \frac{\rho\mu}{\lambda}\right)\left\langle x^{k+1}-x^k, y^{k+1}-y^k \right\rangle&\leq 0. \label{ineq41}
\end{align}
Replacing 
$$\left\langle x^{k+1}-x^k, y^{k+1}-y^k \right\rangle = -\frac{1}{2}\|(x^{k+1}-x^k)-(y^{k+1}-y^k)\|^2+\frac{1}{2}\|x^{k+1}-x^k\|^2+\frac{1}{2}\|y^{k+1}-y^k\|^2$$ into \eqref{ineq41}, 
we get
\begin{align}
E_{k+1}(\delta)-E_k(\delta)+\frac{1}{2}\left(a_2+\delta \frac{\rho\mu}{\lambda}\right)\|x^{k+1}-x^k\|^2 + \left(b_2+\frac{a_2}{2}-\delta \frac{\rho\mu}{2\lambda}\right)\|y^{k+1}-y^k\|^2&\nonumber\\
+\frac{1}{2}\left(\delta \frac{\rho\mu}{\lambda}-a_2 \right)\|(x^{k+1}-x^k)-(y^{k+1}-y^k)\|^2&\leq 0. \label{42}
\end{align}
In particular, when $\delta=\delta_1=(a_2+b_2) \frac{\lambda}{\rho\mu}$, \eqref{42} reduces to
\begin{align}
E_{k+1}(\delta_1)-E_k(\delta_1)+\frac{1}{2}(2a_2+b_2)\|x^{k+1}-x^k\|^2+\frac{b_2}{2}\|y^{k+1}-y^k\|^2&\nonumber\\
+\frac{b_2}{2}\|(x^{k+1}-x^k)-(y^{k+1}-y^k)\|^2&\leq 0, \nonumber
\end{align}
or
\begin{align}
\phi(z^{k+1})\ \leq&\ \phi(z^k)-\frac{1}{2}(2a_2+b_2)\|x^{k+1}-x^k\|^2-\frac{b_2}{2}\|y^{k+1}-y^k\|^2\nonumber\\
&-\frac{1}{2}b_2\|(x^{k+1}-x^k)-(y^{k+1}-y^k)\|^2. \label{43}
\end{align}

{\bf (b)} By setting $\bar a=\frac{1}{2}\min\{2a_2+b_2, b_2\}$, which is clearly positive. The inequality \eqref{43} implies that
\begin{align}
\phi(z^{k+1})\leq \phi(z^k)-{\bar a}\| z^{k+1}-z^k\|^2. \label{ineq43}
\end{align}
Therefore $\lim\limits_{k\rightarrow\infty}\phi(z^k)=\phi^*$ does exist and $\lim\limits_{k\rightarrow\infty}\| z^{k+1}-z^k\|=0$.

To proceed, set $\omega_x^k=\delta_1\left(p^k-\nabla h(x^k)+\nabla\varphi(x^k)\right)+ax^k+by^k$, where $p^k\in\partial  g(x^k)$, and set also $\omega_y^k=ax^k+by^k$. Then
$$\omega^k=(\omega_x^k, \omega_y^k)\in\partial^M\phi(z^k).$$
By the definition of Algorithm~\ref{al::3}, we have that
$$0\in\partial g(x^k)+\frac{1}{\lambda}(x^k-x^{k-1})-\nabla h(x^{k-1})+\nabla\varphi(x^{k-1})+\frac{\mu}{\lambda}(\alpha x^{k-1}+\beta y^{k-1}),$$
which implies that
$$p^k=-\frac{1}{\lambda}(x^k-x^{k-1})+\nabla h(x^{k-1})-\nabla\varphi(x^{k-1})-\frac{\mu}{\lambda}(\alpha x^{k-1}+\beta y^{k-1}).$$
Hence
\begin{align*}
\omega^k\ =&\ \bigg(\delta_1\left(\nabla h(x^{k-1})-\nabla h(x^k)+\nabla\varphi(x^k)-\nabla\varphi(x^{k-1})\right)-\frac{\delta_1}{\lambda}(x^k-x^{k-1})\\
&\qquad -\delta_1\frac{\mu}{\lambda}(\alpha x^{k-1}+\beta y^{k-1})+a x^k+b y^k, \ a x^k+b y^k\bigg).
\end{align*}
Therefore
\begin{align}
\|\omega^k\|\ \leq \ &\delta_1 \left(\|\nabla h(x^{k-1})-\nabla h(x^k)\|+\|\nabla\varphi(x^k)-\nabla\varphi(x^{k-1})\|\right)+\frac{\delta_1}{\lambda}\|x^k-x^{k-1}\|\nonumber \\
&+\delta_1\frac{\mu}{\lambda}\|\alpha x^{k-1}+\beta y^{k-1}\|+2\|a x^k+b y^k\|. \label{ineq44}
\end{align}
On the other hand, from the Lipschitz continuity of $\nabla\varphi$ and $\nabla h$ as well as Algorithm~\ref{al::3}, we have
\begin{align*}
\|\nabla\varphi(x^k)-\nabla\varphi(x^{k-1})\|&\leq L_1\|x^k-x^{k-1}\|,\\
\|\nabla h(x^{k-1})-\nabla h(x^k)\| & \leq L_3\|x^k-x^{k-1}\|,\\
\|\alpha x^{k-1}+\beta y^{k-1}\| & =\|-\rho(y^k-y^{k-1})-\gamma \alpha(x^k-x^{k-1})\|\\
&\leq \rho\|y^k-y^{k-1}\|+\gamma \alpha\|x^k-x^{k-1}\|,
\end{align*}
and
\begin{align*}
& \|a x^k+b y^k\| \\
=\ &\|a(x^k-x^{k-1})+b(y^k-y^{k-1})+ax^{k-1}+by^{k-1}\|  \\
\leq \ & a \|x^k-x^{k-1}\|+b\|y^k-y^{k-1}\|+\|ax^{k-1}+by^{k-1}\| \\
\leq \ & \frac{2}{2+\alpha+\beta}\big[\alpha\|x^k-x^{k-1}\|+\beta\|y^k-y^{k-1}\|+\rho\|y^k-y^{k-1}\|+\gamma \alpha\|x^k-x^{k-1}\|\big]  \\
= \ & \frac{2}{2+\alpha+\beta}\left[(\alpha+\gamma \alpha)\|x^k-x^{k-1}\|+(\beta+\rho)\|y^k-y^{k-1}\|\right].
\end{align*}
So \eqref{ineq44} yields
\begin{align}
\|\omega^k\|\leq &\left(\delta_1(L_1+L_3+\frac{1}{\lambda}+\frac{\mu}{\lambda} \alpha \gamma)+\frac{4(\alpha+\alpha\gamma)}{2+\alpha+\beta}\right)\|x^k-x^{k-1}\| \notag\\
&+\left(\delta_1\frac{\mu\rho}{\lambda}+\frac{4(\beta+\rho)}{2+\alpha+\beta}\right)\| y^k-y^{k-1}\|. \label{jiao22.4.4.}
\end{align}
Set
\begin{align*}
C_1 & = \delta_1\left(L_1+L_3+\frac{1}{\lambda}+\frac{\mu}{\lambda} \alpha \gamma\right)+\frac{4(\alpha+\alpha\gamma)}{2+\alpha+\beta} \\
C_2 & = \delta_1\frac{\mu\rho}{\lambda}+\frac{4(\beta+\rho)}{2+\alpha+\beta}, \;\hbox{ and } \\
C & = \sqrt{C_1^2+C_2^2}.
\end{align*}
Then we deduce from \eqref{jiao22.4.4.} that
\begin{align*}
\|\omega^k\| \leq C \|z^k-z^{k-1}\|. 
\end{align*}
Therefore
\begin{align}
d\left(0; \partial^M\phi(z^k)\right) \leq C \|z^k-z^{k-1}\|. \label{ineq46}
\end{align}
Suppose that $\{x^{k_i}\}\subset\{x^k\}$ and $x^{k_i}\rightarrow x^*$ as $i\rightarrow\infty$, then from Theorem \ref{theo3} {\bf (d)}, we get $\lim\limits_{i\rightarrow\infty}y^{k_i}=y^*$ such that $\alpha x^*+\beta y^*=0$. Therefore $\{z^{k_i}\}=\{(x^{k_i}, y^{k_i})\} \rightarrow (x^*, y^*)=  z^*$ as $i\rightarrow\infty$. 
Since $\phi$ is a K{\L} function, there exist $\zeta >0$, a neighborhood $V$ of $z^*$ and a continuous concave function
$\theta:[0, \zeta)\rightarrow[0, +\infty)$ such that $\theta(0)=0$ and $\forall z\in V(z^*)$ satisfying
$$\phi^*<\phi(z)<\phi^*+\zeta$$
we have then
\begin{align}
\theta'(\phi(z)-\phi^*) \ d\left(0; \partial^M\phi (z)\right)\geq 1. \label{ineq47}
\end{align}
Let $\epsilon>0$ such that $\Bbb B(z^*; \epsilon)\subset V(z^*)$ where $\Bbb B(z^*; \epsilon)$ is a ball centered at $z^*$ and radius $\epsilon$. Since $\lim\limits_{i\rightarrow\infty}z^{k_i}=z^*$, $\lim\limits_{k\rightarrow\infty}\|z^{k+1}-z^k\|=0$, $\lim\limits_{k\rightarrow\infty}\phi(z^k)=\phi^*$ and $\phi (z^k)>\phi^*,\;\forall k$,
we can find a positive integer number $k_{\epsilon}$ such that
\begin{align}
z^{k_{\epsilon}}\in\Bbb B(z^*; \epsilon),\;\; \phi^*<\phi(z^{k_{\epsilon}})<\phi^*+\zeta, \label{ineq48}
\end{align}
and
\begin{align}
\|z^{k_{\epsilon}}-z^*\|+\frac{\|z^{k_{\epsilon}}-z^{k_{\epsilon}-1}\|}{4}+\sigma \theta(\phi(z^{k_{\epsilon}})-\phi^*)<\frac{3\epsilon}{4}, \label{ineq49}
\end{align}
where $\sigma=\frac{C}{\bar a}$.

We observe that if $k\geq k_{\epsilon}$, $z^k\in\Bbb B(z^*, \epsilon)$ and $\phi^*<\phi(z^*)<\phi^*+\zeta$, then
\begin{align}
\|z^{k}-z^{k+1}\|\leq \frac{\|z^{k-1}-z^{k}\|}{4}+\sigma \left[\theta(\phi(z^{k})-\phi^*)-\theta(\phi(z^{k+1}-\phi^*)\right], \label{ineq50}
\end{align}
Indeed, since $\theta$ is concave on $[0, \zeta)$, we have
$$\theta(t)-\theta(s)\geq\theta'(t)(t-s),\;\;  \forall t, s\in[0, \zeta).$$
Replacing $t=\phi(z^k)-\phi^*$ and $s=\phi(z^{k+1})-\phi^*$ and combining with \eqref{ineq46}, \eqref{ineq47} and \eqref{ineq43}, we get from the above inequality that
\begin{align*}
\ & C\|z^{k-1}-z^k\| \left[\theta\left(\phi(z^k)-\phi^*\right)-\theta\left(\phi(z^{k+1})-\phi^*\right)\right] \\
\geq \ & d\left(0; \partial^M\phi(z^k)\right) \theta'\left(\phi(z^k)-\phi^*\right) \left(\phi(z^k)-\phi(z^{k+1})\right)\\
\geq \ & \phi(z^k)-\phi(z^{k+1})\\
\geq \ & \bar a\|z^k-z^{k+1}\|^2.
\end{align*}
Hence,
\begin{align*}
\theta\left(\phi(z^k)-\phi^*\right)-\theta\left(\phi(z^{k+1})-\phi^*\right)\ \geq&\ \frac{\bar a}{C} \frac{\|z^k-z^{k+1}\|^2}{\|z^{k-1}-z^k\|}\\
\geq & \ \frac{1}{\sigma}\left[\|z^k-z^{k+1}\|-\frac{\|z^{k-1}-z^k\|}{4}\right],
\end{align*}
where the last inequality comes from that fact that
$$\frac{\|z^k-z^{k+1}\|^2}{\|z^{k-1}-z^k\|}+\frac{\|z^{k-1}-z^k\|}{4}\geq \|z^k-z^{k+1}\|.$$
So we get \eqref{ineq50}.

We next show that $z^k\in\Bbb B(z^*, \epsilon)$ for all $k\geq k_{\epsilon}$ by induction. 
Indeed, it deduces from \eqref{ineq48} that $z^{k_{\epsilon}}\in\Bbb B(z^*, \epsilon)$. 
Suppose that
$z^{k_{\epsilon}}, z^{k_{\epsilon}+1}, \ldots, z^{k_{\epsilon}+r-1}\in\Bbb B(z^*, \epsilon)$ for some $r\geq 1$, we need verifying that $z^{k_{\epsilon}+r}\in\Bbb B(z^*, \epsilon)$. We get from \eqref{ineq43} and \eqref{ineq48} that
$$\phi^*<\phi(z^k)<\phi^*+\zeta,\;\;\forall \ k\geq k_{\epsilon}.$$
Using the inequality \eqref{ineq50} for $k=k_{\epsilon}, k_{\epsilon}+1,\ldots, k_{\epsilon}+r-1$, we have
\begin{align*}
\|z^{k_{\epsilon}}-z^{k_{\epsilon}+1}\| \ \leq& \ \frac{\|z^{k_{\epsilon}-1}-z^{k_{\epsilon}}\|}{4}+\sigma\left[\theta\left(\phi(z^{k_{\epsilon}})-\phi^*\right)-\theta\left(\phi(z^{k_{\epsilon}+1})-\phi^*\right)\right]\\
\|z^{k_{\epsilon}+1}-z^{k_{\epsilon}+2}\| \ \leq& \ \frac{\|z^{k_{\epsilon}}-z^{k_{\epsilon}+1}\|}{4}+\sigma\left[\theta\left(\phi(z^{k_{\epsilon}+1})-\phi^*\right)-\theta\left(\phi(z^{k_{\epsilon}+2})-\phi^*\right)\right]\\
\vdots & \\
\|z^{k_{\epsilon}+r-1}-z^{k_{\epsilon}+r}\| \ \leq& \ \frac{\|z^{k_{\epsilon}+r-2}-z^{k_{\epsilon}+r-1}\|}{4}+\sigma\left[\theta\left(\phi(z^{k_{\epsilon}+r-1})-\phi^*\right)-\theta\left(\phi(z^{k_{\epsilon}+r})-\phi^*\right)\right].
\end{align*}
Hence
\begin{align*}
\sum\limits_{i=1}^r\|z^{k_{\epsilon}+i}-z^{k_{\epsilon}+i-1}\| \ \leq  \ \frac{1}{4}\sum\limits_{i=1}^r\|z^{k_{\epsilon}+i}-z^{k_{\epsilon}+i-1}\|+\frac{1}{4}\|z^{k_{\epsilon}}-z^{k_{\epsilon}-1}\| -\frac{1}{4}\|z^{k_{\epsilon}+r}-z^{k_{\epsilon}+r-1}\|&\\
+\sigma\left[\theta\left(\phi(z^{k_{\epsilon}})-\phi^*\right)-\theta\left(\phi(z^{k_{\epsilon}+r})-\phi^*\right)\right].&
\end{align*}
Therefore
\begin{align}
\sum\limits_{i=1}^r\|z^{k_{\epsilon}+i}-z^{k_{\epsilon}+i-1}\| \ \leq  \ \frac{4}{3}\left[\frac{\|z^{k_{\epsilon}}-z^{k_{\epsilon}-1}\|}{4}+\sigma \theta\left(\phi(z^{k_{\epsilon}})-\phi^*\right)\right]. \label{ineq51}
\end{align}
It is clear that
 $$\|z^{k_{\epsilon}+r}-z^*\|\leq\|z^{k_{\epsilon}}-z^*\|+\sum\limits_{i=1}^r\|z^{k_{\epsilon}+i}-z^{k_{\epsilon}+i-1}\|,$$
using \eqref{ineq51} and \eqref{ineq49}, it implies that
\begin{align*}
\|z^{k_{\epsilon}+r}-z^*\| \ \leq & \ \frac{4}{3}\left[\frac{\|z^{k_{\epsilon}}-z^{k_{\epsilon}-1}\|}{4}+\sigma \theta\left(\phi(z^{k_{\epsilon}})-\phi^*\right)\right]+ \|z^{k_{\epsilon}}-z^*\|\\
< &\ \frac{4}{3}\left(\frac{3\epsilon}{4}-\|z^{k_{\epsilon}}-z^*\|\right)+\|z^{k_{\epsilon}}-z^*\|\\
\leq & \ \epsilon.
\end{align*}
Thus $z^{k_{\epsilon}+r}\in\Bbb B(z^*, \epsilon)$. So $x^k\in \Bbb B(z^*, \epsilon)$ for all $k\geq k_{\epsilon}$.

Because $z^k\in\Bbb B(z^*, \epsilon)$ and $\phi^*<\phi(z^k)<\phi^*+\zeta$, $\forall k\geq k_{\epsilon}$, the inequality \eqref{ineq51} holds for all $r$.
Consequently, the series $\sum\limits_{k=1}^{\infty}\|z^{k+1}-z^k\|$ is convergent, i.e., $\{z^k\}$ is a Cauchy sequence, therefore $\lim\limits_{k\rightarrow\infty}z^k$ does exist, combining with $\lim\limits_{i\rightarrow\infty}z^{k_i}=z^*$, we get  $\lim\limits_{k\rightarrow\infty}z^k=z^*$. Hence $\lim\limits_{k\rightarrow\infty}x^k=x^*$.

{\bf (c)} Now we prove the assertion {\bf{(c)}}.
For each $k \geq 1$, set $\mathcal{R}_k = \sum_{j = k}^{\infty}\|z^{j+1} - z^j\|$.

Since $\lim\limits_{k \to \infty}z^k = z^* = (x^*, y^*) $ with $\alpha x^* + \beta y^* = 0 $, it implies that $\| z^k - z^* \| \leq \mathcal{R}_k$.

By the assumption in {\bf (c)}, the inequality \eqref{ineq47} becomes

\begin{equation*}
M(1 - \kappa ) (\phi(z) - \phi^*)^{-\kappa}\text{d}\left(0; \partial^M\phi(z)\right) \geq 1.
\end{equation*}
Combining with \eqref{ineq46} we get
\begin{equation*}
M(1 - \kappa ) (\phi(z^k) - \phi^*)^{-\kappa}C \| z^k - z^{k-1}\| \geq 1.
\end{equation*}
Hence,
\begin{equation}\label{4.15}
\left(\phi(z^k) - \phi^*\right)^{1-\kappa} \leq \left(M(1 - \kappa ) C\right)^{\frac{1-\kappa}{\kappa}} \| z^k - z^{k-1}\|^{\frac{1-\kappa}{\kappa}}.
\end{equation}
For all $k\geq k_{\epsilon}$, it follows from \eqref{ineq51} that

\begin{equation*}
\mathcal{R}_k \leq \frac{1}{3}\|z^k - z^{k-1}\| + \frac{4C}{3\bar{a}}\left(\phi(z^k) - \phi^*\right)^{1-\kappa}.
\end{equation*}
Combining with \eqref{4.15} we get,

\begin{equation*}
\mathcal{R}_k \leq T_1(\mathcal{R}_{k-1} - \mathcal{R}_{k} )  + T_2(\mathcal{R}_{k-1} - \mathcal{R}_{k})^{1-\kappa},
\end{equation*}
where $T_1 = \frac{1}{3}, T_2 = \frac{4C}{3\bar{a}}\big(M(1 - \kappa ) C\big)^{\frac{1-\kappa}{\kappa}}$.
Because $\|x^k - x^*\| \leq \|z^k - z^*\|$, by the same argument with the proof of Theorem~2 in \cite{AHBJ}, we get the conclusion of assertion {\bf (c)}.
\end{proof}

\section{A Numerical Example}\label{Sec:5}

In this section, we consider some examples to illustrate the convergence of Algorithm~\ref{al::main} with the aim to compare its numerical behavior with some existing algorithms, namely the proximal point algorithm proposed by An and Nam in~\cite{AN} (denoted by Algorithm A-N) and the inertial proximal algorithm proposed by Maing\'{e} and Moudafi in~\cite{MM} (denoted by Algorithm M-M). 
To do so, we consider the following problem
$$ \min\big\{f(x) : x \in \Bbb{R}^n \big\}, $$
where the function $f : \Bbb{R}^n \to \Bbb{R}$ given by the following formula
$$ f(x) = \left( \sum_{i=1}^n \cos x_i - n \right)^2 + \sum_{i=1}^{n-1}\left(x_i - x_{i+1}\right)^2 + \left( \sum_{i=1}^{n}x_i^2 - 4n\pi^2\right)^2. $$
By setting $ \varphi(x) =  \left( \sum_{i=1}^n \cos x_i - n \right)^2$, then we have $\varphi$ is a continuously differentiable and nonconvex on $\Bbb{R}^n$. In addition,
\begin{align*}
\nabla\varphi (x) &= -2\left( \sum_{i=1}^n \cos x_i - n \right)\left(\sin x_1, \sin x_2, \ldots, \sin x_n \right)^T\\ 
                         & = -2\left( \sum_{i=1}^n \cos x_i \right) \sin x + 2n \sin x,
\end{align*}
where $\sin x := \big(\sin x_1, \sin x_2, \ldots, \sin x_n \big)^T.$ We have,
\begin{equation*}
\begin{aligned}
\ & \|\nabla\varphi (x) - \nabla\varphi (x) \|^2 \\
=\ & \left\| 2n (\sin x - \sin y) - 2 \left(\sum_{i=1}^n \cos x_i\right)(\sin x -  \sin y) -2 \sum_{i=1}^n (\cos x_i  - \cos y_i) \sin y \right\|^2\\
\leq \ & (4n^2 + 4n^2 +4n^2) \left( 2\| \sin x - \sin y\|^2 + \sum_{i=1}^n (\cos x_i  - \cos y_i)^2\right) \\
\leq \ & 36n^2\|x-y\|^2.
\end{aligned}
\end{equation*}
Consequently, $\|\nabla\varphi (x) - \nabla\varphi (y) \| \leq 6n \|x-y\|,\ \forall x, y \in \Bbb{R}^n.$
Hence, $\nabla \varphi$ is Lipschitz continuous with constant $L_1 = 6n.$
Now, we set 
$$g(x) = \sum_{i=1}^{n-1}(x_i - x_{i+1})^2 + \left(\sum_{i=1}^{n}x_i^2\right)^2 +16n^2\pi^4 \text{ and } h(x) = 8n\pi^2\left(\sum_{i=1}^{n}x_i^2\right).$$
Then it can be seen that $g, h$ are continuously differentiable and convex on $\Bbb{R}^n$ and we have
$$ f(x) = \varphi (x) + g(x) - h(x),\ \forall x \in \Bbb{R}^n.$$
For $x(t) = (t, 2\pi, 2\pi, \ldots, 2\pi)^T$ then we have
\begin{eqnarray*}
u(t) & = & f(x(t)) = (\cos t - 1)^2 + (t-2\pi)^2 + (t^2 - 4\pi^2)^2,\\ 
u'(t) & = & -2\sin t \cos t + 2 \sin t + 2t - 4\pi + 4t(t^2 - 4\pi^2), \\  
u''(t) & = & -2\cos 2t + 2\cos t + 12t^2 - 16\pi^2+2. 
\end{eqnarray*}
Since $u''(0) = 2 - 16\pi^2 < 0 $, $u$ is not convex. This leads to $f$ is also not convex on $\Bbb{R}^n.$



 We use the function {\it fmincon} in the Optimization Toolbox in Mathlab R2015 to solve all convex optimization problems, the programs are performed on a PC Desktop with Intel(R) Core(TM) i7-9700 CPU @ 3.00 GHz with  16.00 GB Ram. 

The parameters have been chosen in all the experiments as follows:
\begin{itemize}
\item $\lambda_k=12n$,  $\eta=0.99$ , $\alpha = 0.9$ for Algorithm~\ref{al::main};
\item $\lambda=12n$ for Algorithm N-N;
\item $\alpha = \beta =1$, $\gamma = 0.5$, $\mu = 0.1$, $\tau = -\frac{2+\alpha}{20\beta}$,  $\lambda=\frac{1-\mu(\gamma \alpha+\rho)}{6n}$, $\rho = 1+\tau.\beta + \frac{1}{2}(\alpha + \beta)$, $y^0 = x^0$ for Algorithm M-M.
\end{itemize}


Starting point is chosen such that $x^0 = a$ or $x^0 = b$  with $a = (0.1, 0.1, 0.1, 0.1, \ldots )^T$ and $b= (1, 1.2, 1, 1.2, \ldots )^T$.

To terminate the Algorithm, we use the stopping criteria $\frac{\|x^{k+1}-x^k\|}{ \max \{1, \| x^k\| \}}  \leq \epsilon$ with a tolerance $\epsilon = 10^{-5}$. The computation results are reported in {\sc Table} \ref{Tab1}.

\begin{table}[!ht]
\centering
\renewcommand{\arraystretch}{1.25}
\begin{tabular}{|c|c|c|c|c| | c|c|c|c|c|}
\hline
$n$ &$x^0$ & Algorithms & Iter. & Cpu(s)& $n$ &$x^0$ & Algorithms & Iter. & Cpu(s)\\
\hline
  5& $a$ & \ref{al::main} &4.0& 0.4688 &5& $b$ & \ref{al::main} &77.0& 4.6094   \\
\hline 
5& $a$ &A-N &9.0& 0.8125  & 5& $b$ & A-N &154.0& 8.6875 \\
\hline 
5& $a$ & M-M &10.0& 0.9063  & 5& $b$ & M-M &158.0&9.2813 \\
\hline 
 10& $a$ & \ref{al::main} &4.0& 0.5469   & 10& $b$ & \ref{al::main} &102.0& 6.7500\\
\hline 
10& $a$ &A-N &9.0& 0.8750  & 10& $b$ & A-N &203.0& 12.6719\\
\hline 
10& $a$ & M-M &10.0& 0.9844  & 10& $b$ & M-M &209.0& 13.9063\\
\hline 
 50& $a$ & \ref{al::main} &4.0& 0.7188  & 50& $b$ & \ref{al::main} &103.0& 9.3906\\
\hline 
50& $a$ &A-N &9.0& 1.3438  & 50& $b$ & A-N &211.0& 19.0156\\
\hline 
50& $a$ & M-M &10.0& 1.3906  & 50& $b$ & M-M &217.0& 20.9219\\
\hline 
100& $a$ & \ref{al::main} &4.0& 1.1406  & 100& $b$ & \ref{al::main} &80.0& 17.7031\\
\hline 
100& $a$ &A-N &9.0& 2.2969 & 100& $b$ & A-N &167.0& 34.3281\\
\hline 
100& $a$ & M-M &10.0& 2.2813  & 100& $b$ & M-M &171.0& 35.5000\\
\hline 
300& $a$ & \ref{al::main} &4.0& 7.2656   & 300& $b$ & \ref{al::main} &58.0& 148.6563\\
\hline 
300& $a$ &A-N &9.0& 20.3906  & 300& $b$ & A-N &127.0& 332.0781\\
\hline 
300& $a$ & M-M &10.0& 23.3594  & 300& $b$ & M-M &123.0& 329.5938\\
\hline 
500& $a$ & \ref{al::main} &5.0& 11.1563   & 500& $b$ & \ref{al::main} &96.0& 245.4219\\
\hline 
500& $a$ &A-N &10.0& 24.0625  & 500& $b$ & A-N &181.0& 471.2656\\
\hline 
500& $a$ & M-M &10.0& 23.6563  & 500& $b$ & M-M &144.0& 383.2500\\
\hline 
\end{tabular}
\medskip
\caption{Results computed with some starting points and regularized parameters.}\label{Tab1}
\end{table}

From Table~\ref{Tab1} we can see that the proposed Algorithm~\ref{al::main} has competitive
advantages over existing algorithms especially in computational time and number of iterations.

\section{An application to variable selection}\label{Sec:6}

In this section, we apply Algorithm~\ref{al::main} to solve a variable selection problem in linear regression and compare its performance with the generalized proximal point algorithm proposed by An and Nam~\cite{AN} (denoted by Algorithm A-N). 
The goal is to illustrate the efficiency of the proposed Algorithm~\ref{al::main} and to demonstrate its advantages when dealing with nonconvex problems.
	
\subsection{The variable selection problem}
Variable selection plays a fundamental role in statistical modeling, especially when the number of predictors is pretty large. 
It aims to identify a subset of important variables that contribute most to the response, thereby improving prediction accuracy and interpretability. 
A popular approach is to add a {\it penalty} term to the least squares loss, leading to penalized regression methods such as the well-known lasso~\cite{Tibshirani1996}. 
However, the lasso uses an $\ell_1$ penalty which is convex and tends to produce biased estimates for large coefficients. 
To overcome this drawback, Fan and Li~\cite{FanLi2001} introduced the SCAD (Smoothly Clipped Absolute Deviation) penalty, which is nonconvex and enjoys the oracle property: it performs as well as if the true model were known in advance.

Consider the linear regression model
\begin{eqnarray*}
y_i &=& x_i^T \beta + \varepsilon_i,\quad i=1,\ldots,n,
\end{eqnarray*}
where $y_i\in\mathbb{R}$ is the response, $x_i\in\mathbb{R}^p$ is the vector of predictors, $\beta\in\mathbb{R}^p$ is the unknown coefficient vector, and $\varepsilon_i$ are i.i.d. random errors with mean zero. 
The SCAD-penalized least squares estimator is defined as
\begin{eqnarray*}
\min_{\beta\in\mathbb{R}^p} \left\{ \frac{1}{2n}\|y - X\beta\|_2^2 + \sum_{j=1}^p \rho_{\lambda}(|\beta_j|) \right\},
\end{eqnarray*}
where $X$ is the $n\times p$ design matrix, $\|\cdot\|_2$ denotes the Euclidean norm, and $\rho_{\lambda}(\cdot)$ is the SCAD penalty given by
\begin{eqnarray*}
\rho_{\lambda}(t) & = &
\begin{cases}
\lambda t,&  0\le t\le \lambda,\\[4pt]
		\displaystyle \frac{a\lambda t - t^2/2}{a-1} - \frac{\lambda^2}{2(a-1)},  & \lambda < t \le a\lambda,\\[10pt]
		\displaystyle \frac{(a+1)\lambda^2}{2},&  t > a\lambda,
\end{cases}
\end{eqnarray*}
with $a>2$ (typically $a=3.7$). The SCAD penalty is nonconvex, which makes the optimization problem challenging. 
However, it can be decomposed into a difference of two convex functions. 
Indeed, we have
\begin{eqnarray*}
\rho_{\lambda}(|\beta_j|) & = & \lambda|\beta_j| - h_j(\beta_j),
\end{eqnarray*}
where $h_j$ is a smooth convex function defined by
\begin{eqnarray*}
h_j(\beta_j) & = & 
\begin{cases}
0, & |\beta_j|\le \lambda,\\[4pt]
\displaystyle \frac{(|\beta_j|-\lambda)^2}{2(a-1)}, & \lambda < |\beta_j| \le a\lambda,\\[10pt]
\displaystyle \lambda|\beta_j| - \frac{(a+1)\lambda^2}{2}, & |\beta_j| > a\lambda.
\end{cases}
\end{eqnarray*}
Its derivative is
\begin{eqnarray*}
h_j'(\beta_j) & = & 
\begin{cases}
0, & |\beta_j|\le \lambda,\\[4pt]
\displaystyle \frac{|\beta_j|-\lambda}{a-1}\,\operatorname{sign}(\beta_j), & \lambda < |\beta_j| \le a\lambda,\\[10pt]
\displaystyle \lambda\,\operatorname{sign}(\beta_j), & |\beta_j| > a\lambda.
\end{cases}
\end{eqnarray*}
Consequently, the overall objective function can be written in the form $f(\beta)=\varphi(\beta)+g(\beta)-h(\beta)$ with
\begin{eqnarray*}
\varphi(\beta)=\frac{1}{2n}\|y-X\beta\|_2^2,\qquad 
g(\beta)=\lambda\sum_{j=1}^p|\beta_j|,\qquad 
h(\beta)=\sum_{j=1}^p h_j(\beta_j).
\end{eqnarray*}
	
Observe that the function $\varphi$ is continuously differentiable with gradient
\begin{eqnarray*}
\nabla\varphi(\beta) & = &-\frac{1}{n}X^T(y-X\beta).
\end{eqnarray*}
For any $\beta_1,\beta_2\in\mathbb{R}^p$, we have
\begin{eqnarray*}
\|\nabla\varphi(\beta_1)-\nabla\varphi(\beta_2)\| & = & \left\| -\frac{1}{n}X^T(y-X\beta_1) + \frac{1}{n}X^T(y-X\beta_2) \right\| \\
& = & \left\| \frac{1}{n}X^TX(\beta_1-\beta_2) \right\| \\
& \le & \frac{1}{n}\|X^TX\|_2\,\|\beta_1-\beta_2\|,
\end{eqnarray*}
where $\|X^TX\|_2$ denotes the spectral norm (largest eigenvalue) of the matrix $X^TX$. Therefore, $\nabla\varphi$ is Lipschitz continuous with constant $L_\varphi = \frac{1}{n}\|X^TX\|_2.$

As a result, the functions $g$ and $h$ are convex, and $h$ is additionally smooth with Lipschitz continuous gradient. 
Hence the problem fits exactly the framework of Algorithm~\ref{al::main} with $\varphi$ playing the role of the smooth (possibly nonconvex) part, $g$ the nonsmooth convex part, and $h$ the smooth convex part.
	
\subsection{Algorithm implementation}
	
Here, we implemented Algorithm~\ref{al::main} with the following parameter choices: $\lambda_k = 2L_\varphi$ (constant), $\eta = 0.5$, $\alpha = 0.3$, and the tolerance $\epsilon = 10^{-5}$ for the stopping criterion $\|x^{k+1}-x^k\|\le\epsilon$. 
For comparison, we also ran Algorithm A-N (proposed by An and Nam~\cite{AN}) with the same regularization parameter $\lambda = 2L_\varphi$. 
All optimization problems were solved using R version 4.4.3 on a PC with Windows 11, Intel(R) Core(TM) i7-1165G7 @ 2.80 GHz and 16 GB RAM.
	
\subsection{Data generation and results}
We generated synthetic data as follows. 
For each replication, we set a random seed to ensure reproducibility. 
The design matrix $X$ of size $n\times p$ was generated with entries drawn independently from a standard normal distribution, i.e., $x_{ij} \sim \mathcal{N}(0,1)$. 
The true coefficient vector $\beta$ was set to have the first five entries equal to $2$ and the remaining entries equal to $0$, i.e., $\beta = (2,2,2,2,2,0,\ldots,0)^\top$. 
The response vector $y$ was then computed as $y = X\beta + \varepsilon$, where the errors $\varepsilon_i$ are independently drawn from a normal distribution with mean $0$ and standard deviation $0.5$, i.e., $\varepsilon \sim \mathcal{N}(0, 0.5^2 I_n)$. 
We considered sample sizes $n = 100, 200, 500, 1000, 2000$ and dimensions $p = 50, 100, 300, 500$. 
For each configuration, we ran 100 independent replications and recorded the degrees of freedom (df for short), objective function value (vals for short), number of iterations (iters for short), and CPU time (time(s) for short) until convergence. 
All values reported are averages over 100 replications.
The results are summarized in Table~\ref{tab:simulation_results}.
	
\begin{table}[!ht]
\centering
\begin{tabular}{|cc|cc|cc|cc|cc|}
\hline
\multicolumn{2}{c|}{} & \multicolumn{2}{c|}{df} & \multicolumn{2}{c|}{vals} & \multicolumn{2}{c|}{iters} & \multicolumn{2}{c}{time (s)} \\
\cmidrule(lr){3-4} \cmidrule(lr){5-6} \cmidrule(lr){7-8} \cmidrule(lr){9-10}
$n$ & $p$ & 3.1 & A-N & 3.1 & A-N & 3.1 & A-N & 3.1 & A-N \\
\hline
100 & 50  & 5 & 5 & 1.185 & 1.284 & 26.67  & 51.85  & 2.355 & 2.812 \\
200 & 50  & 5 & 5 & 0.928 & 1.074 & 18.00  & 35.15  & 2.665 & 2.384 \\
500 & 50  & 5 & 5 & 0.615 & 1.011 & 12.60  & 24.62  & 2.586 & 2.933 \\
1000 & 50 & 5 & 5 & 0.722 & 1.032 & 9.88   & 19.65  & 2.975 & 2.969 \\
2000 & 50 & 5 & 5 & 0.514 & 0.826 & 8.27   & 16.70  & 3.343 & 3.633 \\
\hline
100 & 100 & 5 & 5 & 1.328 & 1.389 & 37.28  & 72.68  & 3.829 & 4.265 \\
200 & 100 & 5 & 5 & 1.016 & 1.088 & 23.72  & 46.23  & 3.570 & 4.006 \\
500 & 100 & 5 & 5 & 0.731 & 0.966 & 15.53  & 30.48  & 3.443 & 3.727 \\
1000 & 100 & 5 & 5 & 0.900 & 0.877 & 11.80  & 23.48  & 4.000 & 4.209 \\
2000 & 100 & 5 & 5 & 0.790 & 0.854 & 9.56   & 19.15  & 6.043 & 6.207 \\
\hline
100 & 300 & 5 & 5 & 1.445 & 1.431 & 66.72  & 130.97 & 10.583 & 13.517 \\
200 & 300 & 5 & 5 & 1.158 & 1.344 & 39.78  & 77.58  & 9.613 & 11.146 \\
500 & 300 & 5 & 5 & 0.814 & 1.115 & 23.54  & 46.08  & 14.196 & 15.414 \\
1000 & 300 & 5 & 5 & 0.734 & 0.895 & 16.77  & 33.10  & 15.988 & 17.770 \\
2000 & 300 & 5 & 5 & 0.742 & 0.757 & 12.89  & 25.48  & 25.088 & 26.350 \\
\hline
100 & 500 & 5 & 5 & 1.498 & 1.534 & 91.11  & 179.72 & 19.842 & 28.221 \\
200 & 500 & 5 & 5 & 1.189 & 1.254 & 52.33  & 102.66 & 17.501 & 23.713 \\
500 & 500 & 5 & 5 & 0.871 & 1.210 & 29.68  & 58.16  & 20.379 & 20.244 \\
1000 & 500 & 5 & 5 & 0.752 & 1.110 & 20.58  & 40.17  & 27.512 & 32.813 \\
2000 & 500 & 5 & 5 & 0.827 & 0.920 & 15.19  & 29.83  & 46.495 & 42.208 \\
\hline
\end{tabular}
\medskip
\caption{Simulation results comparing Algorithm~\ref{al::main} with A-N under different sample sizes $n$ and dimensions $p$.} \label{tab:simulation_results}
\end{table}

Table~\ref{tab:simulation_results} presents the comparative results between Algorithm~\ref{al::main} and A-N under various dimensions. 
Both algorithms successfully identified the true model with exactly five nonzero coefficients in all settings, demonstrating their effectiveness for variable selection. 
In terms of objective function values, Algorithm~\ref{al::main} consistently achieved lower values than A-N across most configurations, indicating its ability to find better local minima. 
Regarding computational efficiency, Algorithm~\ref{al::main} required significantly fewer iterations than A-N in all cases — for instance, when $p=500$ and $n=100$, Algorithm~\ref{al::main} took only 91.11 iterations compared to 179.72 iterations for A-N. 
This advantage became more pronounced as the dimension increased. 
In low-dimensional settings ($p < n$), both algorithms performed efficiently, with Algorithm~\ref{al::main} maintaining its iteration advantage while achieving comparable CPU times. 
In high-dimensional settings ($p > n$), the superiority of Algorithm~\ref{al::main} was even more evident, requiring roughly half the iterations of A-N while often achieving lower objective values and comparable computational time. 
These results demonstrate that the incorporation of the Armijo line search in Algorithm~\ref{al::main} substantially accelerates convergence, making it particularly suitable for high-dimensional problems.

\section{Conclusion}\label{Sec:7}
In this paper, wee have proposed a modification of proximal point algorithm for solving nonconvex minimization problem of the form $P(\varphi, g, h): \ \min\limits_{x\in\Bbb R^n}\{ f(x)=\varphi(x)+g(x)-h(x)\},$ where $\varphi$ is differentiable and $g,\ h$ are convex functions. 
This algorithm can be seen as the combination of the proximal point algorithm \cite{AN} and the descent direction algorithm \cite{MMH}. 
We then prove the global convergence and the convergent rate of this algorithm and the inertial proximal point algorithm proposed by Maing\'{e} and Moudafi \cite{MM} for  solving \eqref{problem}. 
A numerical example is also provided to illustrate the efficiency of the proposed algorithm.
Particularly, an application to solve the variable selection problem in linear regression was studied by using the proposed Algorithm~\ref{al::main}.
This motivates us to design/improve more efficient algrithms (e.g., \cite{Kim2026}) with convergence analysis for solving other problems from statistics like heterogeneity analysis etc.

\subsection*{Acknowledgments}
This work was supported by the National Key R\&D Program of China (Grant No. 2022YFA1003701).
Do Sang Kim was supported by the National Research Foundation of Korea (NRF) grant funded by the Korean government (MSIT) (RS-2025-19622979).

\end{document}